\def\cl{\centerline}
\def\vs{\vspace*}
\def\W{\mathcal{L}}
\def\Z{\mathbb{Z}}
\def\C{\mathbb{C}}
\numberwithin{equation}{section}
\newtheorem{theo}{Theorem}[section]
\newtheorem{defi}[theo]{Definition}
\newtheorem{coro}[theo]{Corollary}
\newtheorem{lemm}[theo]{Lemma}
\newtheorem{exam}[theo]{Example}
\newtheorem{prop}[theo]{Proposition}
\newtheorem{case}{Case}
\newtheorem{rema}[theo]{Remark}
\begin{document}
\begin{center}
{\bf\large Finite irreducible conformal modules over the Lie conformal superalgebra $\mathcal{S}(p)$}
\footnote {

$^*$Corresponding author: Haibo Chen (rebel1025@126.com).
}
\end{center}

\cl{Haibo Chen$^*$, Yanyong Hong and    Yucai Su
}

\vs{8pt}

{\small\footnotesize
\parskip .005 truein
\baselineskip 3pt \lineskip 3pt
\noindent{{\bf Abstract:} In the present paper,  we introduce a class of infinite Lie conformal superalgebras $\mathcal{S}(p)$, which are closely related to  Lie conformal algebras of extended Block type defined in \cite{CHS}.
Then all finite non-trivial irreducible conformal modules over  $\mathcal{S}(p)$ for $p\in\C^*$ are  completely classified.
As an application,  we also present the classifications of finite non-trivial irreducible conformal modules over finite quotient algebras $\mathfrak{s}(n)$ for $n\geq1$ and $\mathfrak{sh}$ which is isomorphic to  a subalgebra of Lie conformal algebra of $N=2$ superconformal algebra.
Moreover, as a generalized version of $\mathcal{S}(p)$, the infinite Lie conformal superalgebras $\mathcal{GS}(p)$ are constructed,  which have a
  subalgebra   isomorphic to the finite Lie conformal algebra of $N=2$ superconformal algebra.
  \vs{5pt}

\noindent{\bf Key words:} Lie conformal superalgebra,  finite  conformal module, irreducible
\parskip .001 truein\baselineskip 6pt \lineskip 6pt

\noindent{\it Mathematics Subject Classification (2010):}  17B10,  17B65, 17B68.}}
\parskip .001 truein\baselineskip 6pt \lineskip 6pt
\tableofcontents

\section{Introduction}
The notion of Lie conformal  (super)algebras was originally  introduced  by Kac in \cite{K1,K3},
which encodes an axiomatic description of the
operator product expansion (or rather its Fourier transform) of chiral fields in conformal field theory.
 It plays important
roles in quantum field theory, vertex algebras, integrable systems  and so on, and  has drawn many researchers'  extensive attentions.
As is well known, the theory of Lie conformal (super)algebras gives us a powerful tool
for the study of infinite-dimensional Lie (super)algebras satisfying
the locality property described in \cite{K2}.


 There are a lot of researches on the case of Lie conformal algebra. It follows from \cite{DK}  that  Virasoro Lie conformal algebra and all current
Lie conformal algebras with finite-dimensional simple Lie algebras  exhaust all
finite simple Lie conformal algebras.
The  theory of  finite Lie conformal algebras associated with Virasoro Lie conformal algebra were intensively studied
(see, e.g., \cite{BKV, CK, CK1, DK, WY, LHW}).
Furthermore, the problem of classifying finite simple Lie conformal superalgebras was completely solved
in \cite{FK}.  It  shows that any finite simple Lie conformal superalgebra   is isomorphic to
one of the Lie conformal superalgebras of the  list, which includes current Lie conformal superalgebras over finite-dimensional simple Lie superalgebras,    four series of ``Virasoro-like'' Lie conformal
superalgebras and
exceptional Lie conformal superalgebra $CK_6$.
It is  worth noting that  finite  non-trivial irreducible conformal modules of them  were completely classified in \cite{CK,MZ,CL,BKL1,BKL2,BKL3}.

The infinite Lie conformal  (super)algebra has become one of the major research objects in conformal algebraic theory.
In order to better understand the theory of  infinite Lie conformal (super)algebras, it is very natural to investigate some important examples.  Based on  the relation of Lie algebras and Lie conformal algebras, some   infinite  Lie conformal algebras   were defined   by
 loop Lie algebras and Block type Lie algebras  (see, e.g.,  \cite{WCY,CHSX,FSW,HW,CHS,SXY,SY2}). In a similar way,  a class of infinite Lie conformal superalgebras called loop Virasoro conformal superalgebras were constructed in \cite{DH}, which are associated with the
loop super-Virasoro algebras.
Recently, the infinite Lie conformal superalgebras  of Block type were introduced in \cite{X},
which  contain a  Neveu-Schwarz conformal  subalgebra.
 At the same times, their finite non-trivial irreducible conformal modules   were classified for   $p\neq0$.
  But, for all we know, there are very few works about the  infinite Lie conformal superalgebras.

In this paper, we  define  a new  class of infinite  Lie conformal superalgebras $\mathcal{S}(p)$ with $p\neq0$,
 which  is related to  a class of  extended Block type Lie conformal algebras $\mathfrak{B}(\alpha,\beta,p)$ studied in \cite{CHS}.
{\it The Lie conformal superalgebras of extended  Block type  are $\mathcal{S}(p)=\mathcal{S}(p)_{\bar0}\oplus \mathcal{S}(p)_{\bar1}$}
 with
 $\mathcal{S}(p)_{\bar0}=\oplus_{i\in\Z_+}\C[\partial]L_i\bigoplus \oplus_{i\in\Z_+}\C[\partial]W_i$,
 $\mathcal{S}(p)_{\bar1}=\oplus_{i\in\Z_+}\C[\partial]G_i$   and   $\lambda$-brackets as follows
\begin{eqnarray}
&&\label{11} [L_i\, {}_\lambda \, L_j]=\big((i+p)\partial+(i+j+2p)\lambda \big) L_{i+j},
\\&&[L_i\, {}_\lambda \, W_j]=\Big((i+p)\partial+(i+j+p)\lambda\Big) W_{i+j},
\\&&[L_i\, {}_\lambda \, G_j]=\Big((i+p) \partial +(i+j+2p)\lambda\Big) G_{i+j},
\\&&\label{14}
[W_i\, {}_\lambda \, G_j]=G_{i+j},\ [W_i\, {}_\lambda \, W_j]=[G_i\, {}_\lambda \, G_j]=0
\end{eqnarray}
for $i,j\in\Z_+$. Note that  the even part $\mathcal{S}(p)_{\bar0}$ of $\mathcal{S}(p)$ is
an extended Block type Lie conformal algebra $\mathfrak{B}(\alpha,\beta,p)$ for $\alpha=p,\beta=0$.
 The subalgebra  $\mathfrak{H}=\C[\partial](\frac{1}{p}L_0)\oplus  \C[\partial]W_0$  of $\mathcal{S}(p)_{\bar0}$
 is  so-called  Heisenberg-Virasoro Lie conformal algebra.

This article is organized as follows.

 In Section $2$, we introduce some
basic definitions  and related known results about Lie conformal superalgebras and conformal modules.

In Section 3, by  recalling the   definition of  $\mathfrak{B}(\alpha,\beta,p)$ and certain module structures, we define a class of Lie conformal superalgebras $\mathcal{S}(p)$. Then we investigate  their  subalgebras, quotient algebras and representations  of annihilation superalgebras.

In Section 4,   the irreducibility of all free
non-trivial rank $1+1$ modules over $\mathcal{S}(p)$ are determined. A complete classification
of all finite non-trivial irreducible conformal modules of $\mathcal{S}(p)$ are given, which shows that they must be free of rank $1$ or $1+1$.

In Section $5$, we construct a class of    Lie conformal superalgebras, which are generalizations  of Lie conformal superalgebras  $\mathcal{S}(p)$.   They have some   subalgebras, one of them is exactly the Lie conformal algebra of $N=2$ superconformal algebra.

At last, as a  byproduct  of our main result, we also obtain the classification of all finite non-trivial irreducible conformal modules  over the subalgebra $\mathfrak{sh}$ and quotient algebras $\mathfrak{s}(n)$ for $n\geq1$.

Throughout this paper, all vector spaces, linear maps and tensor products  are  assumed to be  over
complex field $\C$. We denote by  $\C^*$, $\Z$ and $\Z_+$ the sets of   nonzero complex numbers,  integers and nonnegative integers, respectively. Moreover, if $A$ is a vector space, the space of polynomials of $\lambda$ with coefficients in $A$ is denoted by $A[\lambda]$.

\section{Preliminaries}
In this section, we recall some basic concepts and results related to Lie conformal superalgebras and conformal
modules in \cite{DK,K1,K3}.

We denote $\Z_2=\{\bar0,\bar1\}$. A vector space $U$ is called $\Z_2$-graded if $U=U_{\bar 0}\oplus U_{\bar1}$, and $u\in U_{\bar i}$ is called
$\Z_2$-homogenous and write $|u|=\bar i$.

\begin{defi}\label{D1}
A   Lie  conformal  superalgebra  $S=S_{\bar0}\oplus S_{\bar1}$ is a $\Z_2$-graded $\C[\partial]$-module  endowed with a
  $\lambda$-bracket $[a{}\, _\lambda \, b]$
which defines a
linear map $S_\alpha\otimes S_\beta\rightarrow \C[\lambda]\otimes S_{\alpha+\beta}$, where $\lambda$ is an indeterminate, and satisfy the following axioms:
\begin{equation*}
\aligned
&[\partial a\,{}_\lambda \,b]=-\lambda[a\,{}_\lambda\, b],\
[a\,{}_\lambda \,\partial b]=(\partial+\lambda)[a\,{}_\lambda\, b],\\
&[a\, {}_\lambda\, b]=-(-1)^{|a||b|}[b\,{}_{-\lambda-\partial}\,a],\\
&[a\,{}_\lambda\,[b\,{}_\mu\, c]]=[[a\,{}_\lambda\, b]\,{}_{\lambda+\mu}\, c]+(-1)^{|a||b|}[b\,{}_\mu\,[a\,{}_\lambda \,c]]
\endaligned
\end{equation*}
for all $c\in S$, $\Z_2$-homogenous elements $a$, $b$ in $S$, and $\alpha$, $\beta\in \Z_2$.
\end{defi}

A Lie conformal superalgebra is called {\it finite} if it is finitely generated as a $\C[\partial]$-module, or else it is called {\it infinite}.

\begin{defi}\label{defi-module} A  conformal module $M=M_{\bar0}\oplus M_{\bar1}$ over a Lie conformal
superalgebra $S$ is a  $\Z_2$-graded $\C[\partial]$-module endowed with a $\lambda$-action $S_\alpha\otimes M_\beta\rightarrow \C[\lambda]\otimes M_{\alpha+\beta}$ such that
\begin{eqnarray*}
&&(\partial a)\,{}_\lambda\, v=-\lambda a\,{}_\lambda\, v,\ a{}\,{}_\lambda\, (\partial v)=(\partial+\lambda)a\,{}_\lambda\, v,
\\&&
a\,{}_\lambda\, (b{}\,_\mu\, v)-(-1)^{|a||b|}b\,{}_\mu\,(a\,{}_\lambda\, v)=[a\,{}_\lambda\, b]\,{}_{\lambda+\mu}\, v
\end{eqnarray*}
for all $v\in M$, $\Z_2$-homogenous elements $a$, $b$ in $S$, and $\alpha$, $\beta\in \Z_2$.
\end{defi}

Let $M=M_{\bar0}\oplus M_{\bar1}$ be a conformal $S$-module. Obviously, there is a parity-change functor $\Pi$ from the category of $S$-modules to itself, which implies  that  a new module $\Pi(M)$ is obtained by
  $\Pi(M_{\bar0})=M_{\bar1}$ and $\Pi(M_{\bar1})=M_{\bar0}$.
   The  module    $M$ is called {\it finite} if it is finitely generated over $\C[\partial]$.
 We call  that the {\it rank} of       $M$  is $m+n$ as a $\C[\partial]$-module, if the rank of $M_{\bar0}$ is $m$ and  the rank of $M_{\bar1}$ is $n$.  If    $M$  has no non-trivial submodules, the conformal module $M$ is
called {\it irreducible}.

 A Lie conformal superalgebra $S$ is called $\Z$-graded if $S=\oplus_{i\in\Z}S_i$, each  $S_i$ is a $\C[\partial]$-submodule and
 $[S_i\,{}_\lambda\, S_j]\subseteq S_{i+j}[\lambda]$  for any $i, j\in\Z$.
  The conformal module $M$   is $\Z$-graded if $M=\oplus_{i\in\Z}M_i$, each $M_i$ is a $\C[\partial]$-submodule and $S_i\,{}_\lambda\, M_j\subseteq M_{i+j}[\lambda]$ for any $i,j\in\Z$.
Furthermore, if each $M_i$ is freely generated by an element $v_i\in M_i$
over $\C[\partial]$, then $M$ is called a {\it  $\Z$-graded free intermediate series module}.

\begin{defi}\label{d2.3}
 An annihilation superalgebra $\mathcal{A}(S)$ of a Lie conformal superalgebra $S$ is a Lie superalgebra with $\C$-basis $\{a(n)\mid a\in S,n\in\Z_+\}$ and relations $\mathrm{(}$for any $a$, $b\in S$ and $k\in \C$$\mathrm{)}$
\begin{eqnarray}
 \label{2asd2.1} &&(ka)_{(n)}=ka_{(n)},\ (a+b)_{(n)}=a_{(n)}+b_{(n)},\\
&&\label{22.1}[a_{(m)},b_{(n)}]=\sum_{k\in\Z_+}{m\choose k}(a_{(k)}b)_{(m+n-k)}, \ (\partial a)_{(n)}=-na_{(n-1)},
\end{eqnarray}
where $a(n)\in \mathcal{A}(S)_\alpha$ if $a\in S_\alpha$, and $a_{(k)}b$  is called the $k$-th product, given by $[a\, {}_\lambda \, b]=\sum_{k\in\Z_+}\frac{\lambda^{k}}{k!}(a_{(k)}b) $. Furthermore, an extended annihilation superalgebra $\mathcal{A}(S)^e$ of $S$
 is defined by $\mathcal{A}(S)^e=\C \partial\ltimes \mathcal{A}(S)$  with   $[\partial,a_{(n)}]=-na_{n-1}$, where $\C \partial \subseteq \mathcal{A}(S)^e_{\overline{0}}$.
\end{defi}
Now we can
  define {\it $k$-th actions} of $S$ on $M$ for each $j\in \Z_+$, i.e. $a_{(k)}v$ for any $a\in S, v\in M$
\begin{eqnarray}\label{22.111}
a\, {}_\lambda \, v=\sum_{k\in\Z_+}\frac{\lambda^{(k)}}{k!}(a_{(k)}v),
\end{eqnarray}
which is similar to the  definition of   $k$-th product $a_{(k)}b$ for $a, b \in S$.

The following result appeared in  \cite{CK}, which implies that  a close connection between the module of a Lie
conformal superalgebra and that of its extended annihilation superalgebra.
\begin{prop}\label{pro2.4}
A conformal module $M$ over a Lie conformal superalgebra $S$ is the same as a module over the
 Lie superalgebra $\mathcal{A}(S)^e$  satisfying $a_{(n)}v=0$ for $a\in S,v\in M,n\gg 0$.
\end{prop}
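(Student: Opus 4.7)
The plan is to exhibit mutually inverse constructions between conformal $S$-module structures on $M$ and $\mathcal{A}(S)^e$-module structures on $M$ satisfying the locality condition $a_{(n)}v=0$ for $n\gg 0$, and to verify that each construction respects all the relevant axioms.

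In the forward direction, if $M$ is a conformal $S$-module, then $a\,{}_\lambda\,v$ lies in $\C[\lambda]\otimes M$ and is thus polynomial in $\lambda$, so expanding it as in (\ref{22.111}) and reading off the coefficients defines operators $a_{(n)}\in\mathrm{End}_\C(M)$ automatically satisfying $a_{(n)}v=0$ for $n\gg 0$. The $\partial$-sesquilinearity axioms of Definition \ref{defi-module} translate, upon extracting the coefficient of $\lambda^n/n!$, into the relations $(\partial a)_{(n)}=-na_{(n-1)}$ and $[\partial,a_{(n)}]=-na_{(n-1)}$ on $M$, reproducing exactly the defining relations of $\mathcal{A}(S)^e$. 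For the remaining Jacobi-type axiom, I would substitute the expansions of $a\,{}_\lambda\,v$, $b\,{}_\mu\,v$ and $[a\,{}_\lambda\,b]$ into
\begin{equation*}
a\,{}_\lambda\,(b\,{}_\mu\,v)-(-1)^{|a||b|}b\,{}_\mu\,(a\,{}_\lambda\,v)=[a\,{}_\lambda\,b]\,{}_{\lambda+\mu}\,v,
\end{equation*}
expand $(\lambda+\mu)^{m+n-k}$ by the binomial theorem, and compare coefficients of $\lambda^m\mu^n/(m!n!)$. This yields precisely the supercommutator relation (\ref{22.1}) on $M$, so $M$ becomes a module over $\mathcal{A}(S)^e$ of the required type.

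Conversely, given a $\Z_2$-graded $\mathcal{A}(S)^e$-module $M$ with $a_{(n)}v=0$ for $n\gg 0$, I would set $a\,{}_\lambda\,v:=\sum_{n\in\Z_+}\tfrac{\lambda^n}{n!}a_{(n)}v$; locality makes this sum polynomial in $\lambda$, so it takes values in $\C[\lambda]\otimes M$. Reversing the coefficient comparison above, the defining relations (\ref{2asd2.1})--(\ref{22.1}) together with $[\partial,a_{(n)}]=-na_{(n-1)}$ yield exactly the conformal-module axioms of Definition \ref{defi-module}. Since both constructions read off the same Taylor coefficients, they are manifestly inverse to each other.

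The main obstacle is purely computational bookkeeping: the coefficient comparison in the Jacobi identity requires simultaneously tracking the binomial expansion of $(\lambda+\mu)^{m+n-k}$, the factorials $m!n!$, and the Koszul sign $(-1)^{|a||b|}$, and matching them against the binomial coefficient $\binom{m}{k}$ in (\ref{22.1}). All other axioms reduce to immediate coefficient extraction from polynomial identities in $\lambda$ and $\mu$, so once the Jacobi bookkeeping is done the argument is essentially the standard one from the non-super case, transported to the $\Z_2$-graded setting.
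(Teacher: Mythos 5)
Your plan is correct: it is the standard Fourier-coefficient correspondence, and the paper itself gives no proof of Proposition \ref{pro2.4} but simply quotes it from \cite{CK}, where exactly this argument (expanding the $\lambda$-action as in \eqref{22.111}, matching the sesquilinearity axioms with $(\partial a)_{(n)}=-na_{(n-1)}$ and $[\partial,a_{(n)}]=-na_{(n-1)}$, and comparing coefficients of $\lambda^m\mu^n/(m!n!)$ in the Jacobi identity against \eqref{22.1}) is carried out. The only thing separating your proposal from a complete proof is actually executing the binomial/Koszul-sign bookkeeping you describe, which goes through exactly as you anticipate.
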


\section{ Lie conformal superalgebra $\mathcal{S}(p)$}

In this section,  a class of  extended Block type  Lie conformal superalgebras $\mathcal{S}(p)$
are defined.

We first recall the definition of the extended Block type Lie conformal algebra  $\mathfrak{B}(\alpha,\beta,p)$ and its  $\Z$-graded intermediate series modules (see \cite{X,CHS}).

  The  infinite Lie conformal algebra called
  {\it extended Block type Lie conformal algebra} $\mathfrak{B}(\alpha,\beta,p)$ with $p\in \C^*$
 has a  $\C[\partial]$-basis $\{L_{i},W_{i}\mid i\in\Z_+ \}$
satisfying   the following non-trivial  $\lambda$-brackets
\begin{eqnarray*}
&& [L_i\, {}_\lambda \, L_j]=\big((i+p)\partial+(i+j+2p)\lambda \big) L_{i+j},
\\&& [L_i\, {}_\lambda \, W_j]=\Big((i+p)(\partial+\beta)+(i+j+\alpha)\lambda\Big) W_{i+j}
\end{eqnarray*}
for any $\alpha,\beta\in\C$. In the following, we only consider $\alpha=p,\beta=0$.
For  $\alpha_1,\beta_1,\gamma_1\in\C,p \in\C^*$,   the   $\C[\partial]$-module $V(\alpha_1,\beta_1,\gamma_1,p)=\bigoplus_{i\in\Z}\C[\partial]v_i$  is a $\Z$-graded free intermediate
series module over $\mathfrak{B}(\alpha,\beta,p)$ with  $\lambda$-actions as follows:
\begin{eqnarray}\label{v111}
 L_i\, {}_\lambda \, v_j=\Big((i+p)(\partial+\beta_1)+(i+j+\alpha_1)\lambda\Big)v_{i+j},
 \ W_i\, {}_\lambda \, v_j=\gamma_1 v_{i+j}.
\end{eqnarray}
 Inspired by this, we consider a $\Z_2$-graded $\C[\partial]$-module
$$\mathcal{S}(\alpha_1,\beta_1,\gamma_1,p)=\mathcal{S}_{\bar 0}\oplus\mathcal{S}_{\bar 1}$$
with
$\mathcal{S}_{\bar 0}=\oplus_{i\in\Z_+}\C[\partial]L_i\bigoplus\oplus_{i\in\Z_+}\C[\partial]W_i,
\mathcal{S}_{\bar 1}=\oplus_{i\in\Z_+}\C[\partial]G_i$, and satisfying
\begin{eqnarray}
&&\nonumber  [L_i\, {}_\lambda \, G_j]=\Big((i+p)(\partial+\beta_1)+(i+j+\alpha_1)\lambda\Big) G_{i+j},
\\&&\nonumber  [W_i\, {}_\lambda \, G_j]=\gamma_1  G_{i+j},
\ [G_i\, {}_\lambda \, G_j]=0
\end{eqnarray}
for $i,j\in\Z_+$. Let $\alpha_1=2p$, $\beta_1=0$ and $\gamma_1=1$.  Then the  $\Z_2$-graded $\C[\partial]$-module  $\mathcal{S}(\alpha_1,\beta_1, \gamma_1,p)$ becomes the Lie conformal superalgebra $\mathcal{S}(p)$, which  is exactly what we defined in \eqref{11}-\eqref{14}.

Now we present some interesting features on  $\mathcal{S}(p)$ as follows.
\subsection{Subalgebras}
Setting $L=\frac{1}{p}L_0,W= W_0,G=G_0\in \mathcal{S}(p)$ in  \eqref{11}-\eqref{14},  we can obtain the non-vanishing relations as follows:
$$
[L\, {}_\lambda \, L]=(\partial+2\lambda)L,\
  [L\, {}_\lambda \, W]=(\partial+\lambda)W,
\ [L\, {}_\lambda \, G]=(\partial+2\lambda)G, \ [W\, {}_\lambda \, G]=G,
$$
which  is called  Heisenberg-Virasoro Lie conformal superalgebra $\mathfrak{sh}$.
Clearly, the  even part  of $\mathfrak{sh}$ is  Heisenberg-Virasoro Lie  conformal algebra, which has been studied extensively   (see, e.g., \cite{WY,LHW,CHS}).
 We see that $\C[\partial](L+aW)$ for $a\in\C$ spans a  subalgebra of Heisenberg-Virasoro Lie conformal algebra  which is isomorphic to the   Virasoro Lie conformal algebra.
 Now we define the following $\C[\partial]$-module homomorphism from $\mathfrak{sh}$ to Lie conformal algebra of $N=2$ superconformal algebra (see \cite{CL}):
\begin{eqnarray}\label{ln23}
L+\frac{1}{2}\partial W \rightarrow L,\ W\rightarrow J,\ G\rightarrow G^+.
\end{eqnarray}
Then it is easy to check that  $\mathfrak{sh}$ is isomorphic to a subalgebra of Lie conformal algebra of
$N=2$ superconformal algebra.

\subsection{Quotient algebras}
Many finite Lie conformal superalgebras will be obtained by considering the quotient algebras of $\mathcal{S}(p)$.
We note that $\mathcal{S}(p)$ is $\Z$-graded in  the sense of
\begin{eqnarray}\label{b3547877wq72}
\mathcal{S}(p)=\oplus_{k\in\Z_+}\mathcal{S}(p)_k,
\end{eqnarray}
 where $\mathcal{S}(p)_k=\C[\partial] L_k\oplus\C [\partial] W_k\oplus\C [\partial] G_k$. For any $n\in\Z_+,$ we can define a subspace $\mathcal{S}(p)_{\langle n\rangle}$
of $\mathcal{S}(p)$ by
$$\mathcal{S}(p)_{\langle n\rangle}=\oplus_{i\geq n}\C[\partial]L_i\bigoplus\oplus_{i\geq n}\C[\partial]W_i\bigoplus\oplus_{i\geq n}\C[\partial]G_i.$$
Obviously,   $\mathcal{S}(p)_{\langle n\rangle}$ is an ideal of  the  Lie conformal superalgebra of $\mathcal{S}(p)$.
For  $n\in\Z_+$,  define
\begin{eqnarray}\label{b3.2}
\mathcal{S}(p)_{[n]}=\mathcal{S}(p)/\mathcal{S}(p)_{\langle n+1\rangle}.
\end{eqnarray}
Note that $\mathcal{S}(p)_{[0]}\cong \mathfrak{sh}$. Choosing $p=-n$ for $1\leq n\in\Z$,  we can define  the quotient algebras
   $\mathcal{S}(-n)_{[n]}$
 by the following relations
\begin{eqnarray}\label{bn3.2}
\mathfrak{s}(n)=\mathcal{S}(-n)_{[n]}=\mathcal{S}(-n)/\mathcal{S}(-n)_{\langle n+1\rangle}.
\end{eqnarray}
 Then a series of new finite non-simple Lie conformal superalgebras can be produced.
Now we  give the following two examples for $n=1,2$.
\begin{exam}
Setting  $L=-\bar L_0,W=\bar W_0,G=\bar G_0,M=\bar L_1,H=\bar W_1,I=\bar G_1\in \mathfrak{s}(1)$, one can obtain the following non-trivial relations
 \begin{eqnarray*}
&&[L\, {}_\lambda \, L]=(\partial+2\lambda)L,\
  [L\, {}_\lambda \, W]=(\partial+\lambda)W,
\ [L\, {}_\lambda \, G]=(\partial+2\lambda)G,
\\&&
[W\, {}_\lambda \, G]=G,
\ [L\, {}_\lambda \, M]=(\partial+\lambda)M, \ [L\, {}_\lambda \, H]=\partial H,
\\&& [L\, {}_\lambda \, I]=(\partial+\lambda)I,\
[W\, {}_\lambda \, I]=[H\, {}_\lambda \, G]=I,\ [M\, {}_\lambda \, G]=-\lambda I.
\end{eqnarray*}
Other  $\lambda$-brackets   are given by skew-symmetry. We  observe that  $\C[\partial]L\oplus \C[\partial]W$  and $\C[\partial]L\oplus \C[\partial]M$ are both    Heisenberg-Virasoro Lie conformal algebra. Maybe  $\mathfrak{s}(1)$ should be called      BiHeisenberg-Virasoro Lie conformal superalgebra.
\end{exam}

\begin{exam}
Set  $L=-\frac{1}{2}\bar L_0,W=\bar W_0,G=\bar G_0,M=\bar L_1,H=\bar W_1,I=\bar G_1,X=-\bar L_2,Y=-\bar W_2,Z=-\bar G_2\in \mathfrak{s}(2)$.   Then the non-vanishing relations are presented as follows
 \begin{eqnarray*}
&&[L\, {}_\lambda \, L]=(\partial+2\lambda)L,\
  [L\, {}_\lambda \, W]=(\partial+\lambda)W,
\ [L\, {}_\lambda \, G]=(\partial+2\lambda)G, \\&&
 [W\, {}_\lambda \, G]=G,
\ [L\, {}_\lambda \, M]=(\partial+\frac{3}{2}\lambda)M, \
 [L\, {}_\lambda \, H]=(\partial+\frac{1}{2}\lambda) H,
 \\&&
  [L\, {}_\lambda \, I]=(\partial+\frac{3}{2}\lambda)I,\
[W\, {}_\lambda \, I]=I,
\ [L\, {}_\lambda \, X]=(\partial+\lambda)X,\
  [L\, {}_\lambda \, Y]=\partial Y,
\\&&
 [L\, {}_\lambda \, Z]=(\partial+\lambda)Z, \ [W\, {}_\lambda \, Z]=Z,
\ [M\, {}_\lambda \, M]=(\partial+2\lambda)X,\
  [M\, {}_\lambda \, H]=\partial Y,
\\&& [M\, {}_\lambda \, I]=(\partial+2\lambda)Z, \ [H\, {}_\lambda \, I]=Z,
\ [M\, {}_\lambda \, W]=-(\partial+\lambda) H,
\\&&
 [M\, {}_\lambda \, G]=-(\partial+3\lambda)I, \
  [H\, {}_\lambda \, G]=I,
\
[X\, {}_\lambda \, G]=-2\lambda Z,
\ [Y\, {}_\lambda \, G]=Z.
\end{eqnarray*}
Other  $\lambda$-brackets  can be    obtained by skew-symmetry.  Note that  $\C[\partial]L\oplus \C[\partial]M\oplus \C[\partial]X$  and $\C[\partial]L\oplus \C[\partial]W$ are respectively    Schr\"{o}dinger-Virasoro Lie conformal algebra and
   Heisenberg-Virasoro Lie conformal algebra. Maybe  $\mathfrak{s}(2)$ should be called       Schr\"{o}dinger-Heisenberg-Virasoro Lie conformal superalgebra.
\end{exam}

\subsection{Representation   of annihilation superalgebras}
In this section,  the irreducible modules over a subquotient algebra of the
annihilation superalgebra  $\mathcal{A}(\mathcal{S}(p))$ of $\mathcal{S}(p)$ are classified.

Firstly, we provide the explicit super-brackets
of $\mathcal{A}(\mathcal{S}(p))$ as  follows.
\begin{lemm}\label{4.1}
 \begin{itemize}\parskip-7pt
 \item[{\rm (1)}] The annihilation  superalgebra  of $\mathcal{S}(p)$ is
$$\mathcal{A}(\mathcal{S}(p))=\Big\{L_{i,m},W_{j,n},G_{k,l}\mid i,j,k,n\in \Z_+,m,l\in\Z_+\cup\{-1\}\Big\}$$
with the following      Lie super-brackets:
\begin{equation}\label{44.1}
\aligned
&[L_{i,m},L_{j,n}]=\big((m+1)(j+p)-(n+1)(i+p)\big)L_{i+j,m+n},
\\&[L_{i,m},W_{j,n}]=\big((m+1)j-n(i+p)\big)W_{i+j,m+n},
\\&[L_{i,m},G_{j,n}]=\big((m+1)(j+p)-(n+1)(i+p)\big)G_{i+j,m+n},
\\&  [W_{i,m},G_{j,n}]=G_{i+j,m+n},
\ [W_{i,m},W_{j,n}]=[G_{i,m},G_{j,n}]=0,
\endaligned
\end{equation}where $p\in\C^*\mathrm{;}$
 \item[{\rm (2)}] The extended annihilation algebra  is
 $$\mathcal{A}(\mathcal{S}(p))^e=\Big\{L_{i,m},W_{j,n},G_{k,l},\partial\mid i,j,k,n\in\Z_+,m,l\in \Z_+\cup\{-1\}\Big\}$$
satisfying
\eqref{44.1} and
$$[\partial,L_{i,m}]=-(m+1)L_{i,m-1},\ [\partial,W_{j,n}]=-nW_{j,n-1},\ [\partial,G_{k,l}]=-(l+1)G_{k,l-1}.$$
\end{itemize}

\end{lemm}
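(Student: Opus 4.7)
The proof will be a direct unwinding of the definitions: I only need to translate the $\lambda$-brackets \eqref{11}--\eqref{14} of $\mathcal{S}(p)$ into commutators in $\mathcal{A}(\mathcal{S}(p))$ using the recipe of Definition \ref{d2.3}. My plan is to (i) extract the $k$-th products from each $\lambda$-bracket, (ii) substitute them into \eqref{22.1}, (iii) eliminate the $\partial$'s via $(\partial c)_{(n)} = -n c_{(n-1)}$, and (iv) relabel the generators so that the indices match the statement.

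For step (i), I would use $[a\,{}_\lambda\, b] = \sum_{k} \frac{\lambda^{k}}{k!} a_{(k)}b$ and inspect each formula in \eqref{11}--\eqref{14}; only the zeroth and first products turn out to be nonzero, namely $(L_i)_{(0)}L_j=(i+p)\partial L_{i+j}$, $(L_i)_{(1)}L_j=(i+j+2p)L_{i+j}$, and analogously for the $L_i$-$W_j$ and $L_i$-$G_j$ pairs, while $(W_i)_{(0)}G_j=G_{i+j}$ and all remaining products among $\{L,W,G\}$ vanish. Each commutator in \eqref{22.1} therefore telescopes to two terms, $[a_{(M)},b_{(N)}] = (a_{(0)}b)_{(M+N)} + M\,(a_{(1)}b)_{(M+N-1)}$, and step (iii) combines these into a single term supported on the appropriate generator. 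I would then relabel by setting $L_{i,m} := (L_i)_{(m+1)}$, $G_{k,l} := (G_k)_{(l+1)}$, and $W_{j,n} := (W_j)_{(n)}$; this is the natural identification, because the presence of $\partial$ in the zeroth product forces a shift for $L$ and $G$ but not for $W$, which explains why the index set is $\Z_+\cup\{-1\}$ in the first and third cases and $\Z_+$ in the second. Rewriting, for instance, the $L$-$L$ commutator gives $(-(m+n+1)(i+p)+(m+1)(i+j+2p))L_{i+j,m+n} = ((m+1)(j+p)-(n+1)(i+p))L_{i+j,m+n}$, matching \eqref{44.1}; the other lines follow by the same routine.

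Part (2) will be immediate from the definition of $\mathcal{A}(S)^{e}$, which builds in $[\partial, a_{(n)}] = -n a_{(n-1)}$; the same relabeling then produces the three displayed formulas, with the shift appearing only in the $L$- and $G$-cases. I do not anticipate any genuine obstacle, since no products of order $\geq 2$ arise; the main point is to keep track of the index shift and to note that the set displayed in (1) is a $\C$-basis, which holds because $\mathcal{S}(p)$ is $\C[\partial]$-free on $\{L_i, W_j, G_k\mid i,j,k\in\Z_+\}$ and the axioms of Definition \ref{d2.3} reduce every $(\partial^{r} a)_{(n)}$ to a scalar multiple of $a_{(n-r)}$ (and to zero once $r>n$).
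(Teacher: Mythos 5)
Your proposal is correct and follows the paper's proof essentially verbatim: compute the nonzero $k$-th products (only $k=0,1$ survive), feed them into \eqref{22.1} together with $(\partial a)_{(n)}=-na_{(n-1)}$, and relabel $L_{i,m}=(L_i)_{(m+1)}$, $W_{j,n}=(W_j)_{(n)}$, $G_{k,l}=(G_k)_{(l+1)}$, exactly as the authors do. Only note the harmless off-by-one in your displayed sample computation: the zeroth-product contribution should carry coefficient $-(m+n+2)(i+p)$ rather than $-(m+n+1)(i+p)$, which then indeed simplifies to $(m+1)(j+p)-(n+1)(i+p)$.
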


\begin{proof}
By the definition of the $k$-th product    in Definition  \ref{d2.3} and   $\mathcal{S}(p)$,    we conclude that
\begin{eqnarray*}
L_i\,{}_{{}_{(k)}} L_j&=&
\begin{cases}
(i+p)\partial L_{i+j} &\ \mbox{if}\  k=0,\\[4pt]
(i+j+2p)L_{i+j}&\  \mbox{if} \ k=1,\\[4pt]
0&\  \mbox{if} \ k\geq2,
\end{cases}\\
L_i\,{}_{{}_{(k)}} W_j&=&
\begin{cases}
(i+p)\partial W_{i+j} &\ \mbox{if}\  k=0,\\[4pt]
(i+j+p)  W_{i+j}&\  \mbox{if} \ k=1,\\[4pt]
0&\  \mbox{if} \ k\geq2,
\end{cases}\\
L_i\,{}_{{}_{(k)}} G_j&=&
\begin{cases}
(i+p)\partial G_{i+j} &\ \mbox{if}\  k=0,\\[4pt]
(i+j+2p)  G_{i+j}&\  \mbox{if} \ k=1,\\[4pt]
0&\  \mbox{if} \ k\geq2,
\end{cases}\\
W_i\,{}_{{}_{(k)}} G_j&=&
\begin{cases}
  G_{i+j} &\ \mbox{if}\  k=0,\\[4pt]
0&\  \mbox{if} \ k\geq1,
\end{cases}\\
G_i\,{}_{{}_{(k)}} G_j&=&
W_i\,{}_{{}_{(k)}} W_j=0 \quad \mathrm{for \ any}\ k\in\Z_+.
\end{eqnarray*}
From  \eqref{2asd2.1} and \eqref{22.1}, it is easy to see that
 \begin{equation}\label{44.2}
\aligned
&[(L_i)_{(m)},(L_j)_{(n)}]=\big(m(j+p)-n(i+p)\big)(L_{i+j})_{(m+n-1)},
\\&[(L_i)_{(m)},(W_j)_{(n)}]=\big(mj-n (i+p)\big)(W_{i+j})_{(m+n-1)},
\\&[(L_i)_{(m)},(G_j)_{(n)}]=\big(m(j+p)-n (i+p)\big)(G_{i+j})_{(m+n-1)},
\\&[(W_i)_{(m)},(G_j)_{(n)}]=(G_{i+j})_{(m+n)},
\ [(G_i)_{(m)},(G_j)_{(n)}]=0,
\\&[(W_i)_{(m)},(W_j)_{(n)}]=0,
\  [\partial,(L_i)_{(m)}]=-m(L_i)_{(m-1)},
\\&  [\partial,(W_j)_{(n)}]=-n(W_j)_{(n-1)},
\  [\partial,(G_k)_{(l)}]=-l(G_k)_{(l-1)}.
\endaligned
\end{equation}
Then the lemma is proved by  setting $L_{i,m}=(L_i)_{(m+1)}, W_{j,n}=(W_j)_{(n)}$ and $G_{k,l}=(G_k)_{(l+1)}$ in \eqref{44.2} for $i,j,k,n\in\Z_+,m,l\in\Z_+\cup\{-1\}$.
\end{proof}
\begin{rema}
  It  has come to our notice that
  the super Heisenberg-Virasoro algebra  is isomorphic
to the Lie superalgebra   generated by $\{L_{0,m},W_{0,n},G_{0,l}\mid m,n,l\in\Z\}$ in  $\mathcal{A}(\mathcal{S}(p))$ $\mathrm{(}$see \cite{M}$\mathrm{)}$.
\end{rema}

Secondly, we  investigate the representation theory of a subquotient algebra of $\mathcal{A}(\mathcal{S}(p))$. It is obvious that
$$\mathcal{A}(\mathcal{S}(p))^+=\mathcal{A}(\mathcal{S}(p))^+_{\bar0}\oplus
\mathcal{A}(\mathcal{S}(p))^+_{\bar1}$$
is a subalgebra of $\mathcal{A}(\mathcal{S}(p))$, where $\mathcal{A}(\mathcal{S}(p))^+_{\bar0}=\{L_{i,m},W_{j,n}\mid i,j,m,n\in \Z_+\}$
and $\mathcal{A}(\mathcal{S}(p))^+_{\bar1}=\{G_{k,l}\mid k,l\in \Z_+\}$.
For any   $t,N\in\Z_+$, we denote
$$\mathcal{I}(t,N)=\mathcal{I}(k,N)_{\bar0}\oplus\mathcal{I}(k,N)_{\bar1},$$
where $\mathcal{I}(t,N)_{\bar0}=\{L_{i,m},W_{j,n}\in\mathcal{A}(\mathcal{S}(p))_+\mid i,j>t,m,n>N\}$  and $\mathcal{I}(t,N)_{\bar1}=\{G_{k,l}\in\mathcal{A}(\mathcal{S}(p))_+\mid k>t,l>N\}$.
We see that $\mathcal{I}(t,N)$ is an ideal of $\mathcal{A}(\mathcal{S}(p))^+$.
Denote  $$\mathfrak{p}(t,N)=\mathfrak{p}(t,N)_{\bar0}\oplus\mathfrak{p}(t,N)_{\bar1}
=\mathcal{A}(\mathcal{S}(p))^+/\mathcal{I}(t,N).$$
For the later use, denote the following ideals of $\mathfrak{p}(t,N)$ for $t,N\geq1$:
\begin{eqnarray*}
&&\chi(t,N)=\mathrm{span}_{\C}\{ \bar L_{t,m},\bar W_{t,n},\bar G_{t,l}\in\mathfrak{p}(t,N)\mid m,n,l\leq N\},
\\&&\psi(t,N)=\mathrm{span}_{\C}\{\bar L_{i,N},\bar W_{j,N},\bar G_{k,N}\in\mathfrak{p}(t,N)\mid i,j,k\leq t\},
\\&&\phi(t,N)=\mathrm{span}_{\C}\{\bar L_{t,m},\bar W_{t,n},\bar G_{t,l},\bar L_{i,N},\bar W_{j,N},\bar G_{k,N}\in\mathfrak{p}(t,N)\mid m,n,l\leq N,i,j,k\leq t-1\}.
\end{eqnarray*}
We also denote   two finite sets:
$$\Phi=\{(i,m)\mid\bar L_{i,m},\bar W_{i,m},\bar G_{i,m}\in\mathfrak{p}(t,N)\}\backslash\{(0,0)\},
\ \Phi_0=\{(i,m)\in\Phi\mid i-pm=0\}.$$
\begin{lemm}\label{cl2222}
Let $t,N\geq1$, $\Phi_0\neq\emptyset$ and $$i_0=\mathrm{max}\{i\mid(i,m)\in\Phi_0\},\ m_0=\mathrm{max}\{m\mid(i,m)\in\Phi_0\}.$$ Assume that $V=V_{\bar0}\oplus V_{\bar1}$ is a non-trivial finite-dimensional irreducible module over $\mathfrak{p}(t,N)$.
\begin{itemize}\parskip-7pt
 \item[{\rm (1)}]
  If $i_0<t$, then the ideal  $\chi(t,N)$ of $\mathfrak{p}(t,N)$ acts trivially on $V$$\mathrm{;}$
 \item[{\rm (2)}]
  If $m_0<N$, then the ideal  $\psi(t,N)$ of $\mathfrak{p}(t,N)$ acts trivially on $V$$\mathrm{;}$
  \item[{\rm (3)}]
   If $i_0=t,m_0=N$, then the ideal  $\phi(t,N)$ of $\mathfrak{p}(t,N)$ acts trivially on $V$.
\end{itemize}
\end{lemm}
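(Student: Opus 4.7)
For each of the three parts I would consider $W=\{v\in V\mid I\cdot v=0\}$, where $I\in\{\chi(t,N),\psi(t,N),\phi(t,N)\}$ is the claimed ideal. A direct super-Jacobi computation, using that $I$ is a Lie super-ideal of $\mathfrak{p}(t,N)$, shows that $W$ is a $\mathfrak{p}(t,N)$-submodule of $V$. By irreducibility it is enough to prove $W\neq 0$, and this in turn will follow once we know $I$ acts by (super-)commuting nilpotent operators on $V$, by Engel's theorem.

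The central tool is the grading by $\mathrm{ad}(\bar L_{0,0})$. From Lemma \ref{4.1} one has $[\bar L_{0,0},\bar L_{i,m}]=(i-pm)\bar L_{i,m}$ with the analogous formula for $\bar W$ and $\bar G$, so on the finite-dimensional $V$ we obtain a generalized eigenspace decomposition $V=\bigoplus_{\mu}V_{\mu}$, and any element of weight $\lambda\neq 0$ shifts these eigenspaces by $\lambda$ and therefore acts nilpotently. For part (1), $i_{0}<t$ means no $(t,m)$ lies in $\Phi_{0}$, so every generator of $\chi(t,N)$ has nonzero weight; moreover $\chi(t,N)$ is abelian, since any bracket inside it produces first index $2t>t$ which dies in $\mathfrak{p}(t,N)$. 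A commuting family of nilpotent operators then yields $W\neq 0$ by Engel, and irreducibility forces $W=V$. Part (2) is the mirror argument applied to $\psi(t,N)$, using $m_{0}<N$ to guarantee nonzero weights and $2N>N$ to make $\psi(t,N)$ abelian.

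The main obstacle is part (3). The conditions $i_{0}=t$ and $m_{0}=N$ together force $p=t/N$ and $(t,N)\in\Phi_{0}$: any $(t,m'),(i',N)\in\Phi_{0}$ give $tN=i'm'$ with $i'\leq t$, $m'\leq N$, possible only for $i'=t$, $m'=N$. A bookkeeping of $\Phi_{0}$ then shows that the weight-zero part of $\phi(t,N)$ is precisely $\mathrm{span}\{\bar L_{t,N},\bar W_{t,N},\bar G_{t,N}\}=[\phi(t,N),\phi(t,N)]$, and the weight argument fails there. One must first prove that each of these three elements acts trivially on $V$. Using $p=t/N$, direct bracket computations yield that $\bar L_{t,N}$ is central in $\mathfrak{p}(t,N)$, that $\C\bar G_{t,N}$ is a $1$-dimensional ideal, and that modulo $\C\bar G_{t,N}$ the element $\bar W_{t,N}$ is central; moreover each of the three is expressible as a commutator in $\mathfrak{p}(t,N)$, for instance $\bar L_{t,N}=\frac{N}{t(N+2)}[\bar L_{0,1},\bar L_{t,N-1}]$. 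Hence $\bar L_{t,N}$ acts as a scalar on $V$ by Schur, and the scalar vanishes because the trace of a commutator on the finite-dimensional $V$ is zero. Next, $\bar G_{t,N}^{2}=0$ and its kernel is a $\mathfrak{p}(t,N)$-submodule (since $\C\bar G_{t,N}$ is an ideal), so irreducibility together with nilpotency forces $\bar G_{t,N}=0$ on $V$. With $\bar G_{t,N}=0$ on $V$, the element $\bar W_{t,N}$ commutes with all of $\mathfrak{p}(t,N)$ on $V$, hence acts as a scalar by Schur, which again vanishes by the commutator/trace argument.

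Having killed $[\phi,\phi]$, the residual action of $\phi(t,N)$ on $V$ is abelian. All remaining generators have nonzero $\bar L_{0,0}$-weight (by the bookkeeping above) and hence act nilpotently, while the odd $\bar G$-generators additionally square to zero. Engel applied to this commuting family of nilpotents yields $W\neq 0$, and irreducibility forces $W=V$. The principal difficulty of the lemma thus lies entirely in part (3): the weight-zero core $[\phi,\phi]$ has to be neutralised by a three-step centrality/trace/square-zero argument before the weight and Engel machinery that handles (1) and (2) can close the proof.
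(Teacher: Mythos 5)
Your proposal is correct and follows essentially the same route as the paper: decompose with respect to $\mathrm{ad}(\bar L_{0,0})$, observe that nonzero-weight elements of the ideal act nilpotently (the paper packages this as ``completely reducible $\C\bar L_{0,0}$-module with no trivial summand'' plus Lemma 1 of Cheng--Kac, you as Engel plus the fact that the annihilator of an ideal is a submodule), and in case (3) isolate the weight-zero triple $\{\bar L_{t,N},\bar W_{t,N},\bar G_{t,N}\}$ and kill it via the commutator relations. Your three-step centrality/trace/square-zero argument for that triple is a careful filling-in of the step the paper compresses into ``we check that $\mathrm{span}_{\C}\{\bar L_{i_0,m_0},\bar W_{i_0,m_0},\bar G_{i_0,m_0}\}$ acts trivially on $V$,'' and all of your supporting computations (e.g.\ $p=t/N$, $[\phi,\phi]$ equal to the weight-zero span, $\bar L_{t,N}$ central) check out.
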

\begin{proof}
(1)   Consider the action of $\bar L_{0,0}$ on $\chi(t,N)$:
$$[\bar L_{0,0},\bar L_{i,m}]=(i-mp)\bar L_{i,m},\ [\bar L_{0,0},\bar W_{i,m}]=(i-mp)\bar W_{i,m},\ [\bar L_{0,0},\bar G_{i,m}]=(i-mp)\bar G_{i,m},$$
where $0\leq i\leq t, 0\leq m\leq N$.
 It follows from $t>i_0$ that $(t-mp)\neq0$. Obviously,     $\chi(t,N)$ is a completely reducible
$\C \bar L_{0,0}$-module with no trivial summand. By  Lemma $1$ of \cite{CK},
we get that  $\chi(t,N)$  acts trivially on $V$.
Similarly,  (2) can be obtained.

(3) Note that $p>0$. Assume that $\phi(t,N)$ acts non-trivially on $V$. According to the irreducibility of $V$, one can see that $V=\phi(t,N)V$. Choose a decomposition of $\phi(t,N)$ as follows
$$\phi(t,N)=\mathrm{span}_{\C} \{\bar L_{i_0,m_0},\bar W_{i_0,m_0},\bar G_{i_0,m_0}\}+\widetilde{\phi}(t,N),$$
where
$\widetilde{\phi}(t,N)=\phi(t,N)\setminus\mathrm{span}_{\C}   \{\bar L_{i_0,m_0},\bar W_{i_0,m_0},\bar G_{i_0,m_0}\}$.

Considering the action of $\bar L_{0,0}$ on $\widetilde{\phi}(t,N)$,  we know that every element in $\widetilde{\phi}(t,N)$ acts nilpotently on $V$  by Lemma 1 of \cite{CK}. From
\begin{eqnarray*}
&&[\bar L_{i_0,0},\bar L_{0,m_0}]=-((i_0+p)m_0+i_0)\bar L_{i_0,m_0},
\\&&[\bar L_{i_0,0},\bar W_{0,m_0}]=-(i_0+p)m_0\bar W_{i_0,m_0},
\\&&[\bar L_{i_0,0},\bar G_{0,m_0}]=-((i_0+p)m_0+i_0)\bar G_{i_0,m_0},
\end{eqnarray*}
we check that  $\mathrm{span}_{\C} \{\bar L_{i_0,m_0},\bar W_{i_0,m_0},\bar G_{i_0,m_0}\}$ acts trivially on $V$. The results hold.
\end{proof}
\begin{lemm}\label{lemm555}
Let $V=V_{\bar0}\oplus V_{\bar1}$ be a non-trivial finite-dimensional irreducible module over $\mathfrak{p}(t,N)$.
Then we obtain $\mathrm{dim}(V)=\mathrm{dim}(V_{\bar0})=1$ or $\mathrm{dim}(V)=1+1$.
\end{lemm}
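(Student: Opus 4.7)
The plan is to apply Lemma \ref{cl2222} iteratively to reduce $V$ to an irreducible module over a very small sub-Lie-superalgebra, and then classify the finite-dimensional irreducible modules of that small algebra by hand.

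First, since $V$ is finite dimensional, only finitely many basis vectors of $\mathfrak{p}(t,N)$ can act non-trivially on $V$, so I would freely replace $\mathfrak{p}(t,N)$ by its image in $\mathrm{End}_{\C}V$. Whenever $t,N\geq 1$ and $\Phi_0\neq\emptyset$, exactly one of the three scenarios of Lemma \ref{cl2222} will apply and yield that one of the ideals $\chi(t,N)$, $\psi(t,N)$, or $\phi(t,N)$ annihilates $V$; so $V$ descends to an irreducible module over a proper quotient of $\mathfrak{p}(t,N)$. Iterating (with induction on $|\Phi_0|$, or on $(t,N)$ in lexicographic order), I would reach the situation where no pair $(i,m)\neq (0,0)$ in $\Phi_0$ survives in the effective algebra acting on $V$.

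Next, every remaining generator $\bar L_{i,m},\bar W_{i,m},\bar G_{i,m}$ with $(i,m)\neq(0,0)$ is an eigenvector of $\mathrm{ad}(\bar L_{0,0})$ with nonzero eigenvalue $i-pm$, hence acts nilpotently on $V$ by Lemma $1$ of \cite{CK}; the ideal they span then acts trivially on the irreducible $V$. What remains is the three-dimensional subalgebra $\mathfrak{c}=\mathrm{span}_\C\{\bar L_{0,0},\bar W_{0,0},\bar G_{0,0}\}$, whose relations collapse (via \eqref{44.1}) to $[\bar L_{0,0},\bar W_{0,0}]=[\bar L_{0,0},\bar G_{0,0}]=0$, $[\bar W_{0,0},\bar G_{0,0}]=\bar G_{0,0}$, and $\bar G_{0,0}^{\,2}=\tfrac12[\bar G_{0,0},\bar G_{0,0}]=0$. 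Since $\bar L_{0,0}$ is central, it will act as a scalar. I would then pick any $\bar W_{0,0}$-eigenvector $v_0\in V$: using $\bar W_{0,0}\bar G_{0,0}v_0=\bar G_{0,0}\bar W_{0,0}v_0+\bar G_{0,0}v_0$ together with $\bar G_{0,0}^{\,2}v_0=0$, the subspace $\mathrm{span}_\C\{v_0,\bar G_{0,0}v_0\}$ is $\mathfrak{c}$-stable, hence by irreducibility equals $V$. If $\bar G_{0,0}v_0=0$ this gives $\dim V=1$ (with $v_0\in V_{\bar 0}$ after applying $\Pi$ if necessary); otherwise $v_0$ and $\bar G_{0,0}v_0$ lie in opposite parities and are linearly independent, giving $\dim V=1+1$.

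The hard part will be the bookkeeping in the iterative reduction step: Lemma \ref{cl2222} only identifies one specific ideal that acts trivially in each case, and one has to check that modding out by it puts us into a $\mathfrak{p}(t',N')$-type quotient with strictly smaller parameters so the induction terminates. The case $i_0=t$, $m_0=N$ (which requires killing the larger ideal $\phi(t,N)$) and the degenerate case $\Phi_0=\emptyset$ need slight additional care, but in both the same weight/nilpotency argument from Lemma $1$ of \cite{CK} collapses everything onto the three-generator core $\mathfrak{c}$, at which point the direct analysis above finishes the proof.
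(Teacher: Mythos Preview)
Your proposal is correct and follows essentially the same strategy as the paper: iterate Lemma~\ref{cl2222} to shrink $(t,N)$, then invoke the weight argument from \cite[Lemma~1]{CK} on the nilpotent ideal once $\Phi_0=\emptyset$, and finish with a direct analysis of the three-generator core $\mathfrak{c}$. The only organizational difference is that the paper first extracts a common eigenvector for the whole even part (citing \cite[Lemma~3.5]{CHS}) and branches on whether $\bar G_{0,0}v=0$ before doing the reduction, whereas you reduce to $\mathfrak{c}$ first and only then pick a $\bar W_{0,0}$-eigenvector; your ordering is slightly more self-contained since it avoids the external citation. One small point to make explicit in your write-up: when you pick the $\bar W_{0,0}$-eigenvector $v_0$, take it $\Z_2$-homogeneous (possible since $\bar W_{0,0}$ is even), so that $v_0$ and $\bar G_{0,0}v_0$ indeed lie in opposite parities.
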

\begin{proof}
Regard  $V$
 as a finite-dimensional  $\mathfrak{p}(t,N)_{\bar0}$-module. By Lemma 3.5 of \cite{CHS}, we know that
there exists $v \in V$ such that $\bar L_{i,m}v = \sigma_{i,m}v, \bar W_{i,m}v = \tau_{i,m}v$ for all $i,m\in\Z_+$, where $\sigma_{i,m},\tau_{i,m}\in\C$.

When  $\bar G_{0,0}v =0$, by the relation of $[\bar W_{i,m},\bar G_{0,0}]=\bar G_{i,m}$, we obtain $\bar G_{i,m}v=0$  for $i,m\in\Z_+$. Then $\mathrm{dim}(V)=\mathrm{dim}(V_{\bar0})=1$.

If
$\bar G_{0,0}v \neq0$, we present the following two cases.
First, consider
$\Phi_0=\emptyset$.
We have a decomposition of $\mathfrak{p}(t,N)$:
$$\mathfrak{p}(t,N)=\mathrm{span}_{\C} \{\bar L_{0,0},\bar W_{0,0},\bar G_{0,0}\}+\widetilde{\mathfrak{p}}(t,N),$$ where
$\widetilde{\mathfrak{p}}(t,N)=\mathfrak{p}(t,N)\setminus\mathrm{span}_{\C} \{\bar L_{0,0},\bar W_{0,0},\bar G_{0,0}\}$ and
$\widetilde{\mathfrak{p}}(t,N)$ is a nilpotent ideal of $\mathfrak{p}(t,N)$. Considering the action of
$\bar L_{0,0}$ on  $\widetilde{\mathfrak{p}}(t,N)$, which implies   that  $\widetilde{\mathfrak{p}}(t,N)$ is a completely reducible
$\C \bar L_{0,0}$-module with no trivial summand. By  Lemma $1$ of \cite{CK} and $\bar G_{0,0}^2v=0$,
we immediately obtain that $V_{\bar 1}=\C \bar G_{0,0}v$ and $\mathrm{dim}(V)=1+1$.

Next,   consider $\Phi_0\neq\emptyset$. Assume that $t,N\geq1$.
We note that if the  $\chi(t, N)$ (respectively,  ideals $\psi(t, N)$, $\phi(t, N)$) of  $\mathfrak{p}(t,N)$ acts trivially on $V$, then $V$ can be viewed as an irreducible module over $\mathfrak{p}(t-1,N)$ (respectively, $\mathfrak{p}(t,N-1)$, $\mathfrak{p}(t-1,N-1)$). Using simultaneous induction on $t$, $N$ and Lemma  \ref{cl2222}, we obtain that the odd vector can be written as $V_{\bar1}=\C \bar G_{0,0}v$, and $\mathrm{dim}(V)=1+1$.
\end{proof}

\section{Classification of finite irreducible modules}
This section will be  devoted to giving a complete  classification  of all finite non-trivial irreducible conformal modules over  $\mathcal{S}(p)$.
\subsection{Equivalence of modules}
We first recall a useful result  appeared in \cite{CHS}, which is related  to classification of  finite non-trivial irreducible conformal modules over
 $\mathcal{S}(p)_{\bar0}$.
\begin{lemm}\label{5.1}
Assume that  $V$ is a finite non-trivial irreducible conformal module over  $\mathcal{S}(p)_{\bar0}$. Then $V$ is isomorphic to
$V_{a,b,c,d}=\C[\partial]v$ with
  \begin{eqnarray*}
\\&&\begin{cases}
L_0\,{}_\lambda\, v=p(\partial+a\lambda+b)v,\\[4pt]
L_1\,{}_\lambda\,v=\delta_{p+1,0}cv,\\[4pt]
W_0\,{}_\lambda\, v=dv, \\[4pt]
W_i\,{}_\lambda\,v=0,\ i\geq1, \\[4pt]
L_j\,{}_\lambda\,v=0,\ j\geq2
\end{cases}\\
\end{eqnarray*} for  $a,b,c,d\in\C$.
\end{lemm}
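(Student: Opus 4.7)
The plan is to reduce the classification to representations of a finite-dimensional quotient of the annihilation algebra, then exploit the Virasoro-type subalgebra generated by $L_{0}$. By Proposition \ref{pro2.4}, a finite non-trivial irreducible conformal $\mathcal{S}(p)_{\bar 0}$-module $V$ is the same as an $\mathcal{A}(\mathcal{S}(p)_{\bar 0})^{e}$-module with $a_{(n)}v=0$ for $n\gg 0$, and finiteness of $V$ as a $\C[\partial]$-module forces the $\mathcal{A}(\mathcal{S}(p)_{\bar 0})^{+}$-action to descend through a quotient $\mathfrak{p}(t,N)_{\bar 0}$ of the type considered in Section 3.3. So the first step is to locate a common eigenvector $v$ for the image of $L_{0,0}$ (and simultaneously for $W_{0,0}$), which exists by standard finite-dimensional reasoning after invoking Lemma 3.5 of \cite{CHS}.

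The second step is to observe that the rescaling $L=\tfrac{1}{p}L_{0}$ gives a copy of the Virasoro Lie conformal algebra. The classification of finite irreducible conformal modules over Virasoro then forces the $L$-generated $\C[\partial]$-submodule to be free of rank one, namely $\C[\partial]v$ with $L\,{}_{\lambda}\,v=(\partial+a\lambda+b)v$ for some $a,b\in\C$; undoing the rescaling yields $L_{0}\,{}_{\lambda}\,v=p(\partial+a\lambda+b)v$. One then argues, using irreducibility together with the fact that all other generators $L_{i},W_{j}$ commute in a controlled way with $L_{0}$ through the brackets \eqref{11}--\eqref{14}, that $V=\C[\partial]v$ as a whole, so $V$ is free of rank one.

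The third step is to pin down the actions of $W_{0}$ and of $L_{i},W_{i}$ for $i\geq 1$. Because $V$ is rank one and $[L_{0}\,{}_{\lambda}\,W_{0}]=(p\partial+p\lambda)W_{0}$, the $\C[\partial]$-linear $W_{0}\,{}_{\lambda}\,v=f(\partial,\lambda)v$ is constrained by the Jacobi-style axiom of Definition \ref{defi-module}; matching polynomial degrees forces $f$ to be a constant $d\in\C$. For $i\geq 1$ one writes $L_{i}\,{}_{\lambda}\,v=g_{i}(\partial,\lambda)v$ and $W_{i}\,{}_{\lambda}\,v=h_{i}(\partial,\lambda)v$ and imposes the compatibility coming from $[L_{0}\,{}_{\lambda}\,L_{i}]=((i+p)\partial+(i+2p)\lambda)L_{i}$ and $[L_{0}\,{}_{\lambda}\,W_{i}]=((i+p)\partial+ip\,\text{(correct coef)}\,\lambda)W_{i}$; the recurrence together with the vanishing $[W_{i}\,{}_{\lambda}\,W_{j}]=0$ and $[L_{i}\,{}_{\lambda}\,W_{j}]$ relations forces $g_{i}$ and $h_{i}$ to vanish unless the coefficient $(i+p)$ degenerates, which happens precisely when $i=1$ and $p=-1$, producing the lone scalar $c$ in front of $L_{1}$.

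The main obstacle is the third step: controlling the higher-index actions. The difficulty is that a priori $g_{i}(\partial,\lambda)$ and $h_{i}(\partial,\lambda)$ are arbitrary polynomials, and one needs a careful analysis of the Jacobi axiom with $[L_{j}\,{}_{\lambda}\,L_{i}]$ and $[L_{j}\,{}_{\lambda}\,W_{i}]$ (plus the skew-symmetry $[a\,{}_{\lambda}\,b]=-[b\,{}_{-\lambda-\partial}\,a]$) to eliminate all higher contributions except the exceptional $L_{1}$ at $p=-1$. Equivalently, on the annihilation-algebra side one shows the nilpotent ideal $\widetilde{\mathfrak{p}}(t,N)_{\bar 0}$ (after extracting the $L_{0,0}$, $W_{0,0}$ summand) acts trivially, in the spirit of Lemma \ref{cl2222}, with the single exception produced by the zero $i+p=0$.
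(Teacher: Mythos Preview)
The paper does not supply its own proof of this lemma: it is simply recalled from \cite{CHS} (where $\mathcal{S}(p)_{\bar 0}=\mathfrak{B}(p,0,p)$), so there is nothing to compare against. What you have written is an outline of how such a proof goes, and the overall architecture (pass to the extended annihilation algebra, use the Virasoro subalgebra $\C[\partial]\tfrac{1}{p}L_0$ to force rank one, then determine the remaining actions by the Jacobi axiom) is correct and is indeed the method of \cite{CHS}.

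However, your sketch contains genuine errors and a gap in the key step. First, the bracket formulas you invoke are wrong: from \eqref{11}--\eqref{14} one has $[L_{0}\,{}_{\lambda}\,L_{i}]=\big(p\partial+(i+2p)\lambda\big)L_{i}$ and $[L_{0}\,{}_{\lambda}\,W_{i}]=\big(p\partial+(i+p)\lambda\big)W_{i}$, not $((i+p)\partial+\cdots)$ as you write, and the literal placeholder ``(correct coef)'' should not appear in a proof. Second, and more seriously, your explanation of the exceptional case is not right: you say the higher actions vanish ``unless the coefficient $(i+p)$ degenerates, which happens precisely when $i=1$ and $p=-1$''. But $i+p=0$ also occurs for $(i,p)=(2,-2),(3,-3),\ldots$, so this criterion alone does not single out $p=-1$. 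The actual mechanism is that the compatibility with $L_0$ only allows $g_i$ to be constant when $i+p=0$, and then the Jacobi identity for the pair $(L_1,L_1)$ (or more generally the interaction among the $L_j$ with $0<j<i$) forces that constant to vanish unless $i=1$; only at $i=1$ is there no such lower-index obstruction. Your write-up does not identify this, and without it the third step is incomplete.
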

The equivalence between finite conformal modules over $\mathcal{S}(p)$
 and those over its quotient algebra $\mathcal{S}(p)_{[n]}$ for  some $n\in\Z_+$ are given as follows.
 \begin{theo}\label{5.2}
Let  $V$ be a finite non-trivial conformal module over $\mathcal{S}(p)$.
Then the $\lambda$-actions  of $L_i$, $W_i$ and $G_i$ on $V$ are trivial for $i\gg0$.
\end{theo}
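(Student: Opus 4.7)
The approach is to pass to the annihilation superalgebra via Proposition~\ref{pro2.4}, deduce the odd part from the even part, and then handle the even part using the commutator structure of Lemma~\ref{4.1}.

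\emph{Reformulation.} By Proposition~\ref{pro2.4}, it suffices to exhibit an $N \in \mathbb{Z}_+$ such that, in the notation of Lemma~\ref{4.1}, each of the operators $L_{i,m}$, $W_{i,n}$, $G_{i,l}$ annihilates $V$ whenever $i \geq N$, uniformly in the second subscript.

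\emph{Reducing odd to even.} The first step I would take is to derive the $G_i$-statement from the $W_i$-statement. Using the superbracket $[W_{i,m}, G_{0,l}] = G_{i,m+l}$ from Lemma~\ref{4.1}, for every $v \in V$ one has
\begin{equation*}
G_{i,m+l}v \;=\; W_{i,m}(G_{0,l}v) - G_{0,l}(W_{i,m}v),
\end{equation*}
which vanishes as soon as $W_{i,\cdot}V = 0$. As $(m,l)$ ranges over $(\mathbb{Z}_+ \cup \{-1\}) \times \mathbb{Z}_+$ the sum $m+l$ exhausts $\mathbb{Z}_+ \cup \{-1\}$, so the same $N$ handling the $W_i$ also handles the $G_i$. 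The problem therefore reduces to finding a common uniform $N$ that works simultaneously for $L_i$ and $W_i$.

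\emph{Bounding $L_i$ and $W_i$.} Pick $\mathbb{C}[\partial]$-generators $v_1,\dots,v_k$ of $V$ and choose $M \in \mathbb{Z}_+$ (via the locality property in Proposition~\ref{pro2.4}) with $L_{0,m}v_r = W_{0,m}v_r = 0$ for every $r$ and every $m > M$. The Lemma~\ref{4.1} identities
\begin{equation*}
[L_{i,0}, L_{0,m}] = \bigl(p-(m+1)(i+p)\bigr)L_{i,m}, \qquad [L_{1,0}, L_{i,m}] = \bigl((i+p)-(m+1)(1+p)\bigr)L_{i+1,m}
\end{equation*}
and their $W$-counterparts then let me express $L_{i,m}v_r$ and $W_{i,m}v_r$ in terms of $L_{0,m}(L_{i,0}v_r)$ and $L_{0,m}(W_{i,0}v_r)$, which vanish once the $\partial$-degrees of $L_{i,0}v_r$, $W_{i,0}v_r$ are smaller than $m-M$. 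The last assertion is controlled using the extended-algebra relation $[\partial,L_{0,m}] = -(m+1)L_{0,m-1}$ to compare $L_{0,m}\partial^j v_s$ with $\partial^{j-k}L_{0,m-k}v_s$. Induction on $i$, using the second identity to bootstrap from level $i$ to level $i+1$, then yields the sought uniform bound on $i$.

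\emph{Main obstacle.} The delicate point is converting the \emph{pointwise} locality of Proposition~\ref{pro2.4} into a bound that is \emph{uniform} in the index $i$. The scalar factors $p-(m+1)(i+p)$ and $(i+p)-(m+1)(1+p)$ above occasionally vanish on a sporadic set of pairs $(i,m)$ depending on $p$, and these exceptional loci have to be handled by alternative bracket identities (for instance by using $L_{2,0}$ in place of $L_{1,0}$, or swapping the two factors in the commutator, or going through the $W$-brackets). Combining the induction with careful control of these exceptional pairs is the main effort; once this is done, choosing $N$ large relative to $M$ and to the $\partial$-degrees of $L_{i,0}v_r$ and $W_{i,0}v_r$ for the finitely many small indices $i$ produces the required uniform vanishing of $L_{i,m}$ and $W_{i,n}$, and the odd reduction above completes the proof.
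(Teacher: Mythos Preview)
Your reduction of the odd generators to the even ones via $[W_{i,m},G_{0,l}]=G_{i,m+l}$ is exactly the argument the paper gives, written there in $\lambda$-bracket form as $W_i\,{}_\lambda\,(G_0\,{}_\mu\,v)-G_0\,{}_\mu\,(W_i\,{}_\lambda\,v)=G_i\,{}_{\lambda+\mu}\,v$. So the odd part is correct and matches the paper.

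For the even part, the paper takes a much shorter route than you do: since $\mathcal{S}(p)_{\bar 0}=\mathfrak{B}(p,0,p)$ is an extended Block type Lie conformal algebra, the triviality of $L_i$ and $W_i$ on any finite module for $i\gg 0$ is already Theorem~4.2 of \cite{CHS}, and the paper simply cites that result.

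Your from-scratch attempt at the even part has a genuine gap, and it is deeper than the sporadic vanishing of scalar coefficients that you flag as the ``main obstacle''. The induction you propose, passing from level $i$ to $i+1$ via $[L_{1,0},L_{i,m}]=\big((i+p)-(m+1)(1+p)\big)L_{i+1,m}$, does not deliver uniformity. Writing $L_{i+1,m}v_r$ as a scalar multiple of $L_{1,0}L_{i,m}v_r-L_{i,m}L_{1,0}v_r$ and commuting past $\partial$, the second term involves $L_{i,m-k}v_s$ for $k$ up to the $\partial$-degree $d$ of $L_{1,0}v_r$; the best this yields is $M_{i+1}\le M_i+d$, where $M_i$ is the threshold beyond which $L_{i,m}v_r=0$. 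That bound grows linearly in $i$, not uniformly. More to the point, nothing in your sketch ever produces a base index $i_0$ with $L_{i_0,m}V=0$ for \emph{all} $m\ge -1$; without such a base case the induction never reaches the desired conclusion. Your closing sentence about ``the finitely many small indices $i$'' presupposes exactly what remains to be proved. The actual content of Theorem~4.2 in \cite{CHS} is precisely this uniformity in $i$, and it requires more than the commutator bookkeeping you outline here.
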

\begin{proof}
Regard $V$ as a finite  conformal module over  $\mathcal{S}(p)_{\bar 0}$.
It follows from  Theorem 4.2 of \cite{CHS} that
$L_i\,{}_\lambda\, v=W_i\,{}_\lambda\, v=0$  for all $i\gg0$ and any $v\in V$. Take $i$ such that $i>|p|$.
Fix $i\gg0$.  Since
 $$W_i\,{}_\lambda\,(G_0{}\,_\mu\, v)-G_0\,{}_\mu\,(W_i\,{}_\lambda\, v)=G_{i} \,{}_{\lambda+\mu}\, v,$$
one can check that  $G_{i} \,{}_{\lambda}\, v=0$
 for any $v\in V$, proving the theorem.
 \end{proof}

\begin{rema}
In fact, a finite conformal module over $\mathcal{S}(p)$  is isomorphic to  a   finite conformal module over
$\mathcal{S}(p)_{[n]}$  for some  large enough  $n\in\Z$, where $\mathcal{S}(p)_{[n]}$ is defined as \eqref{b3.2}.
\end{rema}

\subsection{Rank  $1+1$  modules}
In the following,  a characterization of    non-trivial  free   conformal modules of rank $1+1$  over
$\mathcal{S}(p)$ are presented.
 According to Lemma \ref{5.1}, we  can define  the following three classes of conformal modules
 $V_{a,b,c,d,a^\prime,b^\prime,c^\prime,d^\prime}$,
 $V_{a,b,c,d,\sigma}$ and   $V_{a,b,\sigma}$.
\begin{itemize}\parskip-7pt
 \item[{\rm (1)}] $V_{a,b,c,d,a^\prime,b^\prime,c^\prime,d^\prime}=\C[\partial]v_{\bar 0}\oplus\C[\partial]v_{\bar 1}$ with
 \begin{eqnarray*}
\\&&\begin{cases}
L_0\,{}_\lambda\, v_{\bar0}=p(\partial+a\lambda+b)v_{\bar0},\\[4pt]
 L_1\,{}_\lambda\,v_{\bar0}=c v_{\bar0},\\[4pt]
 W_0\,{}_\lambda\, v_{\bar0}=d v_{\bar0}, \\[4pt]
G_i\,{}_\lambda\,v_{\bar0}=0, \ i\geq0,\\[4pt]
W_j\,{}_\lambda\,v_{\bar0}=0,\ j\geq1, \\[4pt]
L_k\,{}_\lambda\,v_{\bar0}=0,\ k\geq2,
\end{cases}\   \mathrm{and} \
 \begin{cases}
L_0\,{}_\lambda\, v_{\bar1}=p(\partial+a^{\prime}\lambda+b^{\prime})v_{\bar1},\\[4pt]
 L_1\,{}_\lambda\,v_{\bar1}=c^{\prime} v_{\bar1},\\[4pt]
 W_0\,{}_\lambda\, v_{\bar1}=d^{\prime} v_{\bar1}, \\[4pt]
G_i\,{}_\lambda\,v_{\bar1}=0,\ i\geq0,\\[4pt]
W_j\,{}_\lambda\,v_{\bar1}=0,\ j\geq1,\\[4pt]
L_k\,{}_\lambda\,v_{\bar1}=0,\ k\geq2,
\end{cases}\\
\end{eqnarray*}
where $a,b,c,d,a^{\prime},b^{\prime},c^{\prime},d^{\prime}\in\C$;
 \item[{\rm (2)}] $V_{a,b,c,d,\sigma}=\C[\partial]v_{\bar 0}\oplus\C[\partial]v_{\bar 1}$ with
 \begin{eqnarray*}
\\&&\begin{cases}
L_0\,{}_\lambda\, v_{\bar0}=p(\partial+a\lambda+b)v_{\bar0},\\[4pt]
 L_1\,{}_\lambda\,v_{\bar0}=c v_{\bar0},\\[4pt]
 W_0\,{}_\lambda\, v_{\bar0}=d v_{\bar0}, \\[4pt]
  G_0\,{}_\lambda\, v_{\bar0}=\sigma v_{\bar1}, \\[4pt]
G_i\,{}_\lambda\,v_{\bar0}=0, \ i\geq1,\\[4pt]
W_j\,{}_\lambda\,v_{\bar0}=0,\ j\geq1, \\[4pt]
L_k\,{}_\lambda\,v_{\bar0}=0,\ k\geq2,
\end{cases}\   \mathrm{and} \
 \begin{cases}
L_0\,{}_\lambda\, v_{\bar1}=p(\partial+(a+1)\lambda+b)v_{\bar1},\\[4pt]
 L_1\,{}_\lambda\,v_{\bar1}=c v_{\bar1},\\[4pt]
 W_0\,{}_\lambda\, v_{\bar1}=(d+1) v_{\bar1}, \\[4pt]
G_i\,{}_\lambda\,v_{\bar1}=0,\ i\geq0,\\[4pt]
W_j\,{}_\lambda\,v_{\bar1}=0,\ j\geq1,\\[4pt]
L_k\,{}_\lambda\,v_{\bar1}=0,\ k\geq2,
\end{cases}\\
\end{eqnarray*}
where $a,b,c,d\in\C,\sigma\in\C^*$;
 \item[{\rm (3)}] $V_{a,b,\sigma}=\C[\partial]v_{\bar 0}\oplus\C[\partial]v_{\bar 1}$ with
  \begin{eqnarray*}
\\&&\begin{cases}
L_0\,{}_\lambda\, v_{\bar0}=p(\partial+a\lambda+b)v_{\bar0},\\[4pt]
 W_0\,{}_\lambda\, v_{\bar0}=(a-1) v_{\bar0}, \\[4pt]
  G_0\,{}_\lambda\, v_{\bar0}=\sigma\big(\partial+a\lambda+b\big) v_{\bar1}, \\[4pt]
G_i\,{}_\lambda\,v_{\bar0}=0,\ i\geq1,  \\[4pt]
W_j\,{}_\lambda\,v_{\bar0}=0,\ j\geq1, \\[4pt]
L_k\,{}_\lambda\,v_{\bar0}=0,\ k\geq1,
\end{cases}\  \mathrm{and} \
 \begin{cases}
L_0\,{}_\lambda\, v_{\bar1}=p(\partial+a\lambda+b)v_{\bar1},\\[4pt]
 W_0\,{}_\lambda\, v_{\bar1}=a v_{\bar1}, \\[4pt]
G_i\,{}_\lambda\,v_{\bar1}=0, \ i\geq0,\\[4pt]
W_j\,{}_\lambda\,v_{\bar1}=0, \ j\geq1, \\[4pt]
L_k\,{}_\lambda\,v_{\bar1}=0,\ k\geq1,
\end{cases}\\
\end{eqnarray*}
where $a,b\in\C,\sigma\in\C^*$.
\end{itemize}

\begin{theo}\label{77.1}
Let $V$ be a non-trivial free   conformal module of rank  $1+1$  over  $\mathcal{S}(p)$.
 \begin{itemize}\parskip-7pt
 \item[{\rm (1)}]
  If $p\neq-1$, then  $V\cong V_{a,b,0,d,a^\prime,b^\prime,0,d^\prime}$ or $V_{a,b,0,d,\sigma}$ or $V_{a,b,\sigma}$    or $\Pi({V}_{a-1,b,0,d-1,\sigma})$ or $\Pi({V}_{a,b,\sigma})$  for some $a,b,d,a^\prime,b^\prime,d^\prime\in\C$, $\sigma\in\C^*$$\mathrm{;}$
 \item[{\rm (2)}]
  If $p=-1$, then  $V\cong V_{a,b,c,d,a^\prime,b^\prime,c^\prime,d^\prime}$  or $V_{a,b,c,d,\sigma}$ or $V_{a,b,\sigma}$ or  $\Pi({V}_{a-1,b,c,d-1,\sigma})$ or $\Pi({V}_{a,b,\sigma})$  for some $a,b,c,d,a^\prime,b^\prime,c^\prime,d^\prime\in\C$, $\sigma\in\C^*$.
\end{itemize}
\end{theo}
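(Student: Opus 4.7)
The plan is to decouple the even and odd actions, then pin down the $G_0$-action via polynomial functional equations arising from the conformal Jacobi identity. By Theorem~\ref{5.2}, every finite conformal module factors through a finite quotient $\mathcal{S}(p)_{[n]}$, so its restriction to $\mathcal{S}(p)_{\bar 0}$ is a rank-$1$ module in each parity. Applying Lemma~\ref{5.1} separately to $V_{\bar 0}=\C[\partial]v_{\bar 0}$ and $V_{\bar 1}=\C[\partial]v_{\bar 1}$ places the $L_i,W_i$ actions into canonical form with parameters $(a,b,c,d)$ and $(a',b',c',d')$ respectively, where $c=c'=0$ unless $p=-1$. It remains only to determine the polynomials
$$G_i\,{}_\lambda\,v_{\bar 0}=\phi_i(\partial,\lambda)\,v_{\bar 1},\qquad G_i\,{}_\lambda\,v_{\bar 1}=\psi_i(\partial,\lambda)\,v_{\bar 0}\quad(i\geq 0).$$

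The key dichotomy comes from $[W_0\,{}_\lambda\,G_0]=G_0$: applying the axiom to $v_{\bar 0}$ and to $v_{\bar 1}$ and specialising $\lambda=0$ yields $(d'-d)\phi_0=\phi_0$ and $(d-d')\psi_0=\psi_0$ respectively, so $\phi_0$ and $\psi_0$ cannot both be non-zero. Up to the parity-change functor $\Pi$ I may assume $\psi_0=0$. Applying $[W_i\,{}_\lambda\,G_0]=G_i$ together with $W_i\,{}_\lambda\,v_{\bar 0}=W_i\,{}_\lambda\,v_{\bar 1}=0$ for $i\geq 1$ then forces $\phi_i=\psi_i=0$ for every $i\geq 1$, so only $\phi_0$ survives; if $\phi_0=0$ as well, the module splits as $V_{a,b,c,d,a',b',c',d'}$.

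Assume now $\phi_0\neq 0$. Expanding the axioms for $[W_0\,{}_\lambda\,G_0]=G_0$ and $[L_0\,{}_\lambda\,G_0]=(p\partial+2p\lambda)G_0$ on $v_{\bar 0}$ yields the polynomial identities
\begin{align*}
(d+1)\phi_0(\partial+\lambda,\mu)-d\phi_0(\partial,\mu) &= \phi_0(\partial,\lambda+\mu),\\
\phi_0(\partial+\lambda,\mu)(\partial+a'\lambda+b')-\phi_0(\partial,\mu)(\partial+\mu+a\lambda+b) &= (\lambda-\mu)\phi_0(\partial,\lambda+\mu).
\end{align*}
Differentiating the first in $\lambda$ at $\lambda=0$ produces the PDE $\phi_{0,\mu}=(d+1)\phi_{0,\partial}$, whose polynomial solutions are $\phi_0(\partial,\mu)=g(\partial+(d+1)\mu)$ for some $g\in\C[X]$. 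Substituting back reduces the first identity to $g(X+(d+1)\lambda)=(d+1)g(X+\lambda)-dg(X)$, and matching the coefficient of $\lambda^2$ gives $d(d+1)g''(X)=0$, forcing $\deg g\leq 1$ whenever $d\notin\{-1,0\}$. The constant branch $g=\sigma$ combined with the second identity produces $V_{a,b,c,d,\sigma}$ with $a'=a+1,\ b'=b$; the linear branch $g(X)=\sigma(X+b)$ forces $d=a-1,\ a'=a,\ b'=b$ and yields $V_{a,b,\sigma}$. Swapping the roles $\phi_0\leftrightarrow\psi_0$ produces the $\Pi$-twisted families.

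The principal obstacle is excluding $\deg g\geq 2$ in the exceptional strata $d\in\{-1,0\}$, where the reduced functional equation for $g$ collapses and provides no direct degree bound; there one must insert the polynomial ansatz into the bilinear second identity and compare coefficients of $\lambda^j\mu^k$ directly to rule out quadratic and higher $g$. A secondary task concerns the $p=-1$ regime: the additional relation $[L_1\,{}_\lambda\,G_0]=-\lambda G_1$ must be checked to force $c=c'$ in the split family $V_{a,b,c,d,a',b',c',d'}$ and $c=0$ in the linear branch $V_{a,b,\sigma}$.
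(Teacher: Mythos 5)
Your skeleton matches the paper's: restrict to $\mathcal{S}(p)_{\bar 0}$ and invoke Lemma \ref{5.1} on each parity, use Theorem \ref{5.2} to truncate, kill $G_i\,{}_\lambda\,v_s$ for $i\geq 1$ via the $[W_i\,{}_\lambda\,G_0]=G_i$ identity, and then solve functional equations for the two remaining polynomials. Your dichotomy (both $W_0$-identities specialised at $\lambda=0$ give $d'=d+1$ and $d=d'+1$ simultaneously if $\phi_0,\psi_0\neq 0$) is a clean alternative to the paper's use of $[G_0\,{}_\lambda\,G_0]=0$, and your characteristics ansatz $\phi_0(\partial,\mu)=g(\partial+(d+1)\mu)$ is a genuinely different way to attack the $W_0$-identity. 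But as written the argument has a real hole exactly where you flag it: for $d\in\{-1,0\}$ the reduced equation for $g$ is an identity and you get no degree bound, and you do not carry out the coefficient comparison you promise. The gap is closable, and in fact the paper's own device closes it uniformly in $d$: set $\mu=0$ in both the $W_0$-identity and the $L_0$-identity and eliminate $\phi_0(\partial,\lambda)$, obtaining $(\partial+(a'-d')\lambda+b)\phi_0(\partial+\lambda,0)=(\partial+(a-d)\lambda+b)\phi_0(\partial,0)$, from which a one-line $\lambda$-degree comparison forces $\phi_0(\partial,0)$ to be constant (when $a'\neq d'$) or of degree one (when $a'=d'$), with no exceptional strata. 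A proof along your lines must either do this elimination or actually perform the $\lambda^j\mu^k$ comparison in the $L_0$-identity for $d\in\{-1,0\}$; stating the strategy is not enough.

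The second defect is in your $p=-1$ endgame, and here the stated conclusion is wrong rather than merely deferred. In the split family $V_{a,b,c,d,a',b',c',d'}$ all $G_i$ act trivially, so the identity coming from $[L_1\,{}_\lambda\,G_0]$ (the paper's \eqref{d41}) is vacuous there and imposes no relation between $c$ and $c'$; the theorem indeed keeps eight independent parameters, so "forcing $c=c'$ in the split family" would contradict the statement you are proving. The place where \eqref{d41} bites is the non-split case $\phi_0\neq 0$: in the constant branch it yields $c=c'$ (which is precisely why the module is $V_{a,b,c,d,\sigma}$ with a single parameter $c$ acting on both parities), and in the linear branch it yields $c=c'=0$ (you assert only $c=0$). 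So you must move that computation from the split family to the two non-split branches and record both vanishing conditions in the linear branch; otherwise the list of modules you obtain does not coincide with the one in the theorem.
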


\begin{proof}
In order to prove this theorem, we first let $V=\C[\partial]v_{\bar 0}\oplus\C[\partial]v_{\bar1}$.  Regarding $V$ as a conformal module over $\mathcal{S}(p)_{\bar 0}$ and using Lemma \ref{5.1},  we can suppose that
\begin{eqnarray*}
\\&&\begin{cases}
L_0\,{}_\lambda\, v_{\bar0}=p(\partial+a\lambda+b)v_{\bar0},\\[4pt]
 L_1\,{}_\lambda\,v_{\bar0}=\delta_{p+1,0}c v_{\bar0},\\[4pt]
 W_0\,{}_\lambda\, v_{\bar0}=d v_{\bar0}, \\[4pt]
W_i\,{}_\lambda\,v_{\bar0}=0,\ i\geq1, \\[4pt]
L_j\,{}_\lambda\,v_{\bar0}=0,\
j\geq2,
\end{cases}\ \mathrm{and}\
\begin{cases}
L_0\,{}_\lambda\, v_{\bar1}=p(\partial+a^\prime\lambda+b^\prime)v_{\bar1},\\[4pt]
 L_1\,{}_\lambda\,v_{\bar1}=\delta_{p+1,0}c^\prime v_{\bar1},\\[4pt]
 W_0\,{}_\lambda\, v_{\bar1}=d^\prime v_{\bar1}, \\[4pt]
W_i\,{}_\lambda\,v_{\bar1}=0,\ i\geq1,\\[4pt]
L_j\,{}_\lambda\,v_{\bar1}=0,\ j\geq2,
\end{cases}\\
\end{eqnarray*}
where $a,b,c,d,a^\prime,b^\prime,c^\prime,d^\prime\in\C$.
Owing to  Theorem \ref{5.2}, we see that   $L_i\,{}_\lambda\, v_s=W_i\,{}_\lambda\, v_s=G_i\,{}_\lambda\, v_s=0$   for $s\in\Z_2,i\gg0$.
Let $t_0\in\Z_+$ be the largest integer such that the action of $\mathcal{S}(p)_{t_0}$  (see \eqref{b3547877wq72}) on $V$ is non-trivial.
For $0\leq t\leq t_0,t\in\Z_+$, we can write
\begin{eqnarray*}
&&L_t\,{}_\lambda\, v_{\bar0}=f_t(\partial,\lambda)v_{\bar0}, \ L_t\,{}_\lambda\, v_{\bar1}=\widehat{f}_t(\partial,\lambda)v_{\bar1},
\\&& W_t\,{}_\lambda\, v_{\bar0}=g_t(\partial,\lambda)v_{\bar0}, \ W_t\,{}_\lambda\, v_{\bar1}=\widehat{g}_t(\partial,\lambda)v_{\bar1},
\\&& G_t\,{}_\lambda\, v_{\bar0}=h_t(\partial,\lambda)v_{\bar1}, \ G_t\,{}_\lambda\, v_{\bar1}=\widehat{h}_t(\partial,\lambda)v_{\bar0},
\end{eqnarray*}
where $f_t(\partial,\lambda),\widehat{f}_t(\partial,\lambda),g_t(\partial,\lambda),\widehat{g}_t(\partial,\lambda),
h_t(\partial,\lambda),\widehat{h}_t(\partial,\lambda)\in\C[\partial, \lambda]$.
Note that
\begin{eqnarray*}
&&f_0(\partial,\lambda)=p(\partial+a\lambda+b),\ f_1(\partial,\lambda)=\delta_{p+1,0}c,f_i(\partial,\lambda)=0,\
\\&& \widehat{f}_0(\partial,\lambda)=p(\partial+a^\prime\lambda+b^\prime),\ \widehat{f}_1(\partial,\lambda)=\delta_{p+1,0}c^\prime,\widehat{f}_i(\partial,\lambda)=0,\
\\&&g_0(\partial,\lambda)=d,\ \widehat{g}_0(\partial,\lambda)=d^\prime,\
g_j(\partial,\lambda)=\widehat{g}_j(\partial,\lambda)=0
  \end{eqnarray*} for $i\geq2,j\geq1$.
For $1\leq t\leq t_0,s\in\Z_2$, we have
\begin{eqnarray*}
   W_t\,{}_\lambda\, (G_0{}\,_\mu\, v_{s})-G_0\,{}_\mu\,(W_t\,{}_\lambda\, v_{s})=G_t\,{}_{\lambda+\mu}\, v_{s},
\end{eqnarray*}
which gives
$h_t(\partial,\lambda)=\widehat{h}_t(\partial,\lambda)=0$ for $t\geq1$.
Then for $s\in\Z_2$, by
$L_1\,{}_\lambda\, (G_0{}\,_\mu\, v_{s})=G_0\,{}_\mu\,(L_1\,{}_\lambda\, v_{s}),$
one has
\begin{eqnarray}\label{d41}
\big(h_0(\partial+\lambda,\mu)c^\prime-h_0(\partial,\mu)c\big)\delta_{p+1,0}=0 \quad \mathrm{and}  \quad \big(\widehat{h}_0(\partial+\lambda,\mu)c-\widehat{h}_0(\partial,\mu)c^\prime\big)\delta_{p+1,0}=0.
\end{eqnarray}
  For $s\in\Z_2$, we have
$G_0\,{}_\lambda\, (G_0{}\,_\mu\, v_{s})+G_0\,{}_\mu\,(G_0\,{}_\lambda\, v_{s})=0,$
which implies
\begin{eqnarray}
&& \label{6.5}    h_{0}(\partial+\lambda,\mu)\widehat{h}_{0}(\partial,\lambda)
+h_{0}(\partial+\mu,\lambda)\widehat{h}_{0}(\partial,\mu)=0,
 \\&&
 \label{6.6}  \widehat{h}_{0}(\partial+\lambda,\mu)h_{0}(\partial,\lambda)
 +\widehat{h}_{0}(\partial+\mu,\lambda)h_{0}(\partial,\mu)=0.
\end{eqnarray}
Setting $\lambda=\mu=0$ in \eqref{6.5} or \eqref{6.6}, one can get
$h_{0}(\partial,0)\widehat{h}_{0}(\partial,0)=0$.
Let us consider the following three cases.
\begin{case}
$h_{0}(\partial,0)=\widehat{h}_{0}(\partial,0)=0$.
\end{case}
For $s\in\Z_2$, we get
$L_0\,{}_\lambda\, (G_0{}\,_\mu\, v_{s})-G_0\,{}_\mu\,(L_0\,{}_\lambda\, v_{s})=[L_0\,{}_\lambda\, G_0]\,{}_{\lambda+\mu}\, v_{s},$
which shows
\begin{eqnarray}
&&\nonumber h_{0}(\partial+\lambda,\mu)(\partial+a^\prime\lambda+b^\prime)-(\partial+\mu+a\lambda+b)h_{0}(\partial,\mu)
\\&=&\label{143} \big(\lambda-\mu\big)h_0(\partial,\lambda+\mu),
\\&&\nonumber \widehat{h}_{0}(\partial+\lambda,\mu)(\partial+a\lambda+b)
-(\partial+\mu+a^\prime\lambda+b^\prime)\widehat{h}_{0}(\partial,\mu)
\\&=&\label{144}   \big(\lambda-\mu\big)\widehat{h}_0(\partial,\lambda+\mu).
\end{eqnarray}
Setting $\mu=0$ in \eqref{143} and \eqref{144}, we check that   $h_{0}(\partial,\lambda)=\widehat{h}_{0}(\partial,\lambda)=0$.
\begin{case}\label{case4}
$h_{0}(\partial,0)\neq0$  \ $\mathrm{and}$ \ $\widehat{h}_{0}(\partial,0)=0$.
\end{case}
Taking  $\mu=0$ in  \eqref{144} gives    $\widehat{h}_{0}(\partial,\lambda)=0$.   Based on
$W_0\,{}_\lambda\, (G_0{}\,_\mu\, v_{\bar0})-G_0\,{}_\mu\,(W_0\,{}_\lambda\, v_{\bar0})=G_0\,{}_{\lambda+\mu}\, v_{\bar0},$
we obtain
\begin{eqnarray}\label{4.55}
d^\prime h_0(\partial+\lambda,\mu)-dh_0(\partial,\mu)=h_0(\partial,\lambda+\mu).
\end{eqnarray}
   Setting $\lambda=\mu=0$ respectively in  \eqref{143} and \eqref{4.55}, we check that
$$b^\prime=b \quad \mathrm{and} \quad d^\prime=d+1.$$ Then we  let   $\mu=0$ in \eqref{143} and \eqref{4.55}, which gives
\begin{eqnarray}
&& \label{479}
 (\partial+a^\prime\lambda+b^\prime)h_{0}(\partial+\lambda,0)- (\partial +a\lambda+b)h_{0}(\partial,0)
 = \lambda h_0(\partial,\lambda),
\\&&  \label{478} d^\prime h_0(\partial+\lambda,0)-dh_0(\partial,0)=h_0(\partial,\lambda).
 \end{eqnarray}
 Inserting \eqref{478} into \eqref{479}, we get
  \begin{eqnarray}\label{578}
  (\partial+(a^\prime-d^\prime)\lambda+b)h_{0}(\partial+\lambda,0)=(\partial +(a-d)\lambda+b)h_{0}(\partial,0).
\end{eqnarray}
Consider
$a^\prime\neq d^\prime,a\neq d$  or $a^\prime= d^\prime,a= d$ in \eqref{578}. It is straightforward to verify that $h_{0}(\partial,0)=\sigma\in\C^*.$
Using this in \eqref{479} and \eqref{478}, we have
$$h_0(\partial,\lambda)=\sigma \quad \mathrm{and} \quad a^\prime=a+1.$$
If $p=-1$ in \eqref{d41}, one has $c=c^\prime$.

Clearly, we need not   discuss the case for  $a^\prime\neq d^\prime$ and $a=d$.
The final case  is $a^\prime=d^\prime$ and $a\neq d$.   Considering the highest degree of $\lambda$ in \eqref{578}, one can obtain $\mathrm{deg}_{\lambda}\big(h_{0}(\partial+\lambda,0)\big)=1$.
The equation of \eqref{479} can be written as
 \begin{eqnarray}\label{146}
h_{0}(\partial,\lambda)+ah_{0}(\partial,0)-a^\prime h_{0}(\partial+\lambda,0)
= \big(\partial+b\big)\frac{h_0(\partial+\lambda,0)-h_0(\partial,0)}{\lambda}.
\end{eqnarray}
 Taking $\lambda\rightarrow0$ in \eqref{146}, we get
$(a-a^\prime+1)h_{0}(\partial,0)
= \big(\partial+b\big)\frac{d}{d\partial}\big(h_{0}(\partial,0)\big),$
which implies
   \begin{eqnarray}\label{148}
 a^\prime=a\quad \mathrm{and}  \quad h_{0}(\partial,0)=\sigma(\partial+b)
     \end{eqnarray} for $\sigma\in\C^*$. Now  putting \eqref{148} in \eqref{479}  or \eqref{478} gives
 $$h_{0}(\partial,\lambda)=\sigma (\partial+a\lambda+b).$$
If $p=-1$ in \eqref{d41}, we have $c=c^\prime=0$.
\begin{case}
$h_{0}(\partial,0)=0$ \ $\mathrm{and}$ \ $\widehat{h}_{0}(\partial,0)\neq0$.
\end{case}
By the similar arguments  in Case \ref{case4},
we show that
$$a^\prime=a-1,b^\prime=b,d^\prime=d-1,\widehat{h}_{0}(\partial,\lambda)=\sigma \ \mathrm{and\ if}\ p=-1, c^\prime=c,$$
or
$$a^\prime=a=d,b^\prime=b,d^\prime=a-1,
\widehat{h}_{0}(\partial,\lambda)=\sigma(\partial+a\lambda+b)\ \mathrm{and\ if}\ p=-1, c^\prime=c=0,$$ where $\sigma\in\C^*$.
 This completes the proof.
\end{proof}

Now we  determine the irreducibilities of conformal  modules $V$ over $\mathcal{S}(p)$ defined  in Theorem  \ref{77.1}.
\begin{prop}\label{pro77.111}
Let $V$ be a   conformal module    over  $\mathcal{S}(p)$ defined  in Theorem  \ref{77.1}.
\begin{itemize}\parskip-7pt
 \item[{\rm (1)}]
If $V\cong V_{a,b,c,d,a^\prime,b^\prime,c^\prime,d^\prime}$,  then $V$ is reducible, which has some submodules as   $\C[\partial] v_{\bar0}$ and $\C[\partial]v_{\bar 1}$.
 \item[{\rm (2)}]
If $V\cong V_{a,b,c,d,\sigma}$,  then $V$ is irreducible  if and only if $(a,c,d)\neq(0,0,0)$.  The module $V_{0,b,0,0,\sigma}$ contains a unique non-trivial submodule $\C[\partial](\partial+b)v_{\bar0}\oplus\C[\partial]v_{\bar 1}\cong {V}_{1,b,\sigma}$.
 \item[{\rm (3)}]
If  $V\cong  {V}_{a,b,\sigma}$,  then $V$ is irreducible  if and only if $a\neq0$. The module  $V_{0,b,\sigma}$ contains a unique non-trivial submodule $\C[\partial]v_{\bar0}\oplus\C[\partial](\partial+b)v_{\bar 1}\cong  V_{0,b,0,-1,\sigma}$.
  \item[{\rm (4)}]
If $V\cong \Pi({V}_{a-1,b,c,d-1,\sigma})$,  then $V$ is irreducible  if and only if $(a,c,d)\neq(1,0,1)$.  The module $\Pi(V_{0,b,0,0,\sigma})$ contains a unique non-trivial submodule $\C[\partial]v_{\bar0}\oplus\C[\partial](\partial+b)v_{\bar 1}\cong  \Pi(V_{1,b,\sigma})$.
 \item[{\rm (5)}]
If  $V\cong \Pi({V}_{a,b,\sigma})$,  then $V$ is irreducible  if and only if $a\neq0$. The module  $\Pi(V_{0,b,\sigma})$ contains a unique non-trivial submodule $\C[\partial](\partial+b)v_{\bar0}\oplus\C[\partial]v_{\bar 1}\cong \Pi(V_{0,b,0,-1,\sigma})$.
\end{itemize}
\end{prop}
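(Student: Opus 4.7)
My plan is to examine, for each of the five module families in Theorem \ref{77.1}, which $\Z_2$-graded $\C[\partial]$-submodules of the underlying rank-$1+1$ space are closed under the non-trivially acting generators. By Theorem \ref{5.2} and the proof of Theorem \ref{77.1}, only $L_0, L_1, W_0, G_0$ need be considered, all other $L_i, W_j, G_k$ acting as zero on $v_{\bar 0}, v_{\bar 1}$. Every $\Z_2$-graded conformal submodule has the form $N = \C[\partial]P(\partial)v_{\bar 0} \oplus \C[\partial]Q(\partial)v_{\bar 1}$ for some monic polynomials $P, Q$ (either possibly zero), so the classification reduces to deciding which pairs $(P, Q)$ yield invariance.

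Part (1) is immediate: since $G_i\,{}_\lambda\, v_{\bar 0} = G_i\,{}_\lambda\, v_{\bar 1} = 0$ for all $i \geq 0$ in $V_{a,b,c,d,a',b',c',d'}$, each of $\C[\partial]v_{\bar 0}$ and $\C[\partial]v_{\bar 1}$ is already a submodule. For Part (2), the coupling $G_0\,{}_\lambda\, v_{\bar 0} = \sigma v_{\bar 1}$ implies that any submodule containing a nonzero element $P(\partial)v_{\bar 0}$ also contains $\sigma P(\partial+\lambda)v_{\bar 1}$ for all $\lambda$, hence all of $\C[\partial]v_{\bar 1}$. I would then impose invariance of $\C[\partial]P(\partial)v_{\bar 0}$ under $L_0\,{}_\lambda$,
\[
L_0\,{}_\lambda\, \bigl(P(\partial)v_{\bar 0}\bigr) \;=\; p\,(\partial + a\lambda + b)\,P(\partial+\lambda)\,v_{\bar 0},
\]
which requires $P(\partial)$ to divide $(\partial + a\lambda + b)P(\partial+\lambda)$ in $\C[\partial,\lambda]$. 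Substituting each root $\alpha$ of $P$ into $\partial \to \alpha - \lambda$ kills $P(\partial+\lambda)$ and leaves a linear condition in $\lambda$; this pins down $P(\partial) = \partial + b$ and forces $a = 0$. Running the same argument for $L_1\,{}_\lambda$ and $W_0\,{}_\lambda$ yields $c = 0$ and $d = 0$. A direct comparison of structure constants after the substitution $\tilde v_{\bar 0} := (\partial+b)v_{\bar 0}$ then confirms $\C[\partial](\partial+b)v_{\bar 0} \oplus \C[\partial]v_{\bar 1} \cong V_{1,b,\sigma}$.

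Part (3) proceeds in parallel but with the ``shifted'' coupling $G_0\,{}_\lambda\, v_{\bar 0} = \sigma(\partial+a\lambda+b)v_{\bar 1}$: dragging $v_{\bar 0}$ into the odd part only produces $\C[\partial](\partial+b)v_{\bar 1}$, not all of $\C[\partial]v_{\bar 1}$, and a similar divisibility analysis for $\C[\partial]Q(\partial)v_{\bar 1}$ under $L_0\,{}_\lambda$ and $W_0\,{}_\lambda$ (which uses the relation $W_0\,{}_\lambda\, v_{\bar 1} = a v_{\bar 1}$) forces $a = 0$ and $Q(\partial) = \partial + b$. Parts (4) and (5) then follow from (2) and (3) by applying the parity-change functor $\Pi$, which swaps the roles of $v_{\bar 0}$ and $v_{\bar 1}$ while preserving the algebra structure.

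The main obstacle will be the uniqueness claim in Part (2): ruling out higher-degree $P$ and non-$(\partial+b)$ linear factors, and establishing that the three conditions $a = 0$, $c = 0$, $d = 0$ are simultaneously necessary rather than merely jointly sufficient. Controlling the degree-in-$\lambda$ of each invariance equation, combined with the fact that $P(\partial)$ is already forced to a single linear factor, should collapse the possibilities quickly; thereafter the identification with $V_{1,b,\sigma}$ (and analogously for the other parts) is a routine structure-constant check.
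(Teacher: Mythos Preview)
The paper states Proposition~\ref{pro77.111} without proof, so there is no argument to compare yours against directly; what follows assesses your outline on its own.

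There is a genuine gap in Parts (2)--(5): you restrict to submodules with nonzero even component. In Part~(2) you argue, correctly, that if $N_{\bar 0}=\C[\partial]P(\partial)v_{\bar 0}$ with $P\neq 0$ then the coupling $G_0\,{}_\lambda\,v_{\bar 0}=\sigma v_{\bar 1}$ forces $N_{\bar 1}=\C[\partial]v_{\bar 1}$, and your divisibility analysis on $P$ then pins down $(a,c,d)=(0,0,0)$ and $P=\partial+b$. But you never treat the case $P=0$. Since $G_i\,{}_\lambda\,v_{\bar 1}=0$ for all $i\ge 0$ in $V_{a,b,c,d,\sigma}$, nothing carries $v_{\bar 1}$ back to the even side, and any $\C[\partial]Q(\partial)v_{\bar 1}$ that is stable under $L_0,L_1,W_0$ is already a conformal submodule. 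In particular $Q=1$ works unconditionally: $\C[\partial]v_{\bar 1}$ is a proper nonzero $\mathcal{S}(p)$-submodule of $V_{a,b,c,d,\sigma}$ for \emph{every} choice of $(a,b,c,d)$, and the same holds for $V_{a,b,\sigma}$. Running your own divisibility argument on $Q$ with the shifted parameters $(a+1,c,d+1)$ in place of $(a,c,d)$ produces a further purely odd submodule $\C[\partial](\partial+b)v_{\bar 1}$ exactly when $(a,c,d)=(-1,0,-1)$.

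This is not a lacuna you can patch within your outline: once the $P=0$ case is included, the irreducibility assertions in (2)--(5), read literally, fail for every parameter value, and the ``unique non-trivial submodule'' claims fail as well (for instance $V_{0,b,0,0,\sigma}$ has the two distinct proper nonzero submodules $\C[\partial]v_{\bar 1}$ and $\C[\partial](\partial+b)v_{\bar 0}\oplus\C[\partial]v_{\bar 1}$). Either ``irreducible'' and ``non-trivial submodule'' are being used here in a sense the paper does not spell out, or the proposition as written is in error. Your divisibility method is exactly the right tool and your computations for the $N_{\bar 0}\neq 0$ branch are sound; the problem is that the statement you are trying to prove does not survive the $N_{\bar 0}=0$ branch.
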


\subsection{Classification theorems}
 The following lemma can be found in  \cite{CK,K3}.
\begin{lemm}\label{lemm2.5}
 Let $\W$ be a Lie superalgebra with a descending sequence of subspaces  $\W\supset\W_0\supset\W_1\supset\cdots$ and an element $\partial$ satisfying $[\partial, \W_n] =\W_{n-1}$ for $n\geq1$. Let $V$ be an $\W$-module and let
$$V_n=\{v\in V | \W_n v =0\},\  n\in\Z_+.$$
Suppose that $V_n\neq0$ for $n\gg0$  and let $N$ denote the minimal such $n$. Suppose that $N\geq1$. Then $V=\C[\partial]\otimes_{\C} V_N$. Particularly, $V_N$ is finite-dimensional
 if $V$ is a finitely generated $\C[\partial]$-module.
\end{lemm}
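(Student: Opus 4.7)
The plan is to prove that the natural evaluation map
\[
\mu: \mathbb{C}[\partial] \otimes_{\mathbb{C}} V_N \longrightarrow V, \qquad \partial^k \otimes v \mapsto \partial^k v,
\]
is a $\mathbb{C}$-linear isomorphism; the finite-dimensionality of $V_N$ under the finite-generation hypothesis on $V$ then follows at once, since a finite $\mathbb{C}[\partial]$-generating set of $V$ pulls back through $\mu^{-1}$ to a finite $\mathbb{C}$-spanning set of $V_N$. The argument decomposes into a stability preparation, an injectivity step via the associated graded, and a surjectivity step via filtration induction.

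First I would verify the stability $\partial V_n \subseteq V_{n+1}$: for $v \in V_n$ and $x \in \W_{n+1}$, the identity $x(\partial v) = \partial(xv) - [\partial, x]v$ makes both summands vanish, since $\W_{n+1} \subseteq \W_n$ kills $v$ and $[\partial, x] \in \W_n$ also kills $v$. Combined with the monotonicity $V_N \subseteq V_{N+1} \subseteq \cdots$ (which follows from $\W_n \supseteq \W_{n+1}$), this lets me pass to the associated graded module $\mathrm{gr}\,V := \bigoplus_{n \geq N} V_n/V_{n-1}$ (setting $V_{N-1} := 0$) equipped with the induced degree-one shift $\bar\partial$.

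For injectivity of $\mu$ I would first prove that $\bar\partial: V_n/V_{n-1} \to V_{n+1}/V_n$ is injective for each $n \geq N$. If $v \in V_n$ has $\partial v \in V_n$, then for any $x \in \W_{n-1}$ lift $x = [\partial, y]$ with $y \in \W_n$ via the hypothesis $[\partial, \W_n] = \W_{n-1}$; the identity $xv = \partial(yv) - y(\partial v) = 0 - 0 = 0$ forces $v \in V_{n-1}$. Given a putative nontrivial relation $\sum_{k=0}^{m} \partial^k v_k = 0$ with $v_k \in V_N$ and $v_m \neq 0$, reducing modulo $V_{N+m-1}$ collapses the sum to $\bar\partial^{m}\overline{v_m} \in V_{N+m}/V_{N+m-1}$; iterating the injectivity of $\bar\partial$ forces $\overline{v_m} = 0 \in V_N/V_{N-1} = V_N$, contradicting $v_m \neq 0$.

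For surjectivity of $\mu$, the target is the companion inclusion $V_{n+1} \subseteq V_n + \partial V_n$ for every $n \geq N$; iterating this yields $V_{n+1} \subseteq \sum_{k=0}^{n+1-N} \partial^k V_N$. Combined with the effective content of the hypothesis --- namely that every $v \in V$ lies in some $V_n$, which is how ``$V_n \neq 0$ for $n \gg 0$'' is actually used in the conformal-algebra applications via Proposition \ref{pro2.4} --- this yields $V \subseteq \sum_k \partial^k V_N$. Establishing this inclusion is the main obstacle. For $v \in V_{n+1}$ one must construct $u \in V_n$ such that $xv = -[\partial, x]u$ for every $x \in \W_n$, i.e., that the two linear maps $\W_n/\W_{n+1} \to V$ given by $\bar x \mapsto xv$ and $\bar x \mapsto -[\partial, x]u$ coincide (both are well-defined thanks to $\W_{n+1} v = 0$ and $\W_n u = 0$ respectively). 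Producing such a $u$ is carried out by lifting a basis of $\W_n/\W_{n+1}$ through $[\partial,\cdot]: \W_{n+1} \twoheadrightarrow \W_n$ and assembling $u$ piece by piece, exploiting the compatibility $[\W_m, \W_n] \subseteq \W_{m+n}$ that holds in the annihilation-algebra setting (cf.\ Lemma \ref{4.1}). This is essentially the argument of \cite{CK, K3}.
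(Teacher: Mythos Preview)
The paper gives no proof of this lemma; it simply quotes the statement and cites \cite{CK,K3}. There is therefore no ``paper's own proof'' to compare against, and your sketch already goes further than the paper does.

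Your stability and injectivity steps are correct and are the standard argument: the identity $xv=[\partial,y]v=\partial(yv)-y(\partial v)$ is exactly what drives the injectivity of $\bar\partial$ on the associated graded, and iterating it kills any putative $\C[\partial]$-relation among elements of $V_N$. You also rightly flag that the literal hypothesis ``$V_n\neq0$ for $n\gg0$'' must in practice be read as $V=\bigcup_n V_n$, which is what Proposition~\ref{pro2.4} actually delivers in the applications. The surjectivity step, however, is not an argument as written. Saying one ``assembles $u$ piece by piece'' does not explain how, given $v\in V_{n+1}$, to manufacture $u\in V_n$ with $[\partial,x]u=-xv$ for \emph{every} $x\in\W_n$; this is a genuine existence problem, and the bare hypotheses of the lemma (a descending chain of mere subspaces with $[\partial,\W_n]=\W_{n-1}$) do not obviously force such a $u$ to exist. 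You evidently sense the gap, since you import the extra multiplicative condition $[\W_m,\W_n]\subseteq\W_{m+n}$---which is \emph{not} part of the stated lemma---and then defer to \cite{CK,K3}. That is legitimate in context, but it means your surjectivity paragraph is ultimately a citation rather than a proof; at that level your treatment and the paper's coincide.
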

The method  in  Lemma 6.3 of \cite{X} can be expanded to the following results  with a slightly different discussion.
\begin{lemm}\label{lemm789}
Any finite non-trivial irreducible   $\mathcal{S}(p)$-module $V$ must be free of rank $1$ or $1+1$.
\end{lemm}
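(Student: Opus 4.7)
The plan is to pass from $V$ to a module over the extended annihilation superalgebra $\mathcal{A}(\mathcal{S}(p))^e$ via Proposition \ref{pro2.4}, extract a finite-dimensional ``top'' space $V_N$ using Lemma \ref{lemm2.5}, and then apply Lemma \ref{lemm555} to pin down $\dim V_N$.

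First, Theorem \ref{5.2} furnishes $t_0\in\Z_+$ so that $L_i, W_i, G_i$ act trivially on $V$ for every $i>t_0$; accordingly, the $\mathcal{A}(\mathcal{S}(p))$-action on $V$ factors through the subalgebra generated by $L_{i,m}, W_{j,n}, G_{k,l}$ with $i,j,k\leq t_0$. Introduce the descending filtration
\[
\mathcal{F}_n=\mathrm{span}_{\C}\bigl\{a_{(k)}\in\mathcal{A}(\mathcal{S}(p))\,\big|\,a\in\mathcal{S}(p),\ k\geq n\bigr\},\qquad V_n=\{v\in V\mid\mathcal{F}_n v=0\},
\]
which satisfies $[\partial,\mathcal{F}_n]\subseteq\mathcal{F}_{n-1}$ by virtue of $[\partial,a_{(k)}]=-k\,a_{(k-1)}$. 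Proposition \ref{pro2.4} forces $a_{(k)}v=0$ for $k\gg 0$ for each individual $v\in V$, hence $V_n\neq 0$ for all sufficiently large $n$. Let $N$ be the minimum such $n$. If $N=0$, then $V_0\neq 0$ is a $\C[\partial]$-submodule on which the entire $\mathcal{S}(p)$-action is trivial (since $[a_\lambda v]=\sum_k \tfrac{\lambda^k}{k!}a_{(k)}v=0$ for $v\in V_0$), contradicting irreducibility together with non-triviality of $V$. Thus $N\geq 1$.

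By Lemma \ref{lemm2.5}, $V=\C[\partial]\otimes_{\C}V_N$ with $V_N=V_{N,\bar 0}\oplus V_{N,\bar 1}$ finite-dimensional; in particular $V$ is automatically free over $\C[\partial]$ on any homogeneous $\C$-basis of $V_N$. Because $a_{(k)}$ acts as zero on $V_N$ for all $k\geq N$ and acts as zero for every $k$ whenever $a$ has first index exceeding $t_0$, the induced $\mathcal{A}(\mathcal{S}(p))^+$-action on $V_N$ factors through a quotient of the shape $\mathfrak{p}(t_0,N')$ for an appropriate $N'$ (reflecting the shift $L_{i,m}=(L_i)_{(m+1)}$, $W_{j,n}=(W_j)_{(n)}$, $G_{k,l}=(G_k)_{(l+1)}$). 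Irreducibility of $V$ as a conformal module then transfers to $\mathfrak{p}(t_0,N')$-irreducibility of $V_N$: any proper $\mathfrak{p}(t_0,N')$-submodule $W\subsetneq V_N$ would generate a proper $\mathcal{A}(\mathcal{S}(p))^e$-submodule $\C[\partial]W\subsetneq V$. Lemma \ref{lemm555} consequently yields $\dim V_N\in\{1,\,1+1\}$, so $V$ is free of rank $1$ or $1+1$ over $\C[\partial]$.

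The main obstacle will be the careful index bookkeeping between the conformal $k$-th products, the elements $L_{i,m}, W_{j,n}, G_{k,l}$ of $\mathcal{A}(\mathcal{S}(p))$, and the generators of $\mathfrak{p}(t_0,N')$, together with verifying that the induced $\mathfrak{p}(t_0,N')$-action on $V_N$ is non-trivial, so that Lemma \ref{lemm555} genuinely applies rather than collapsing $V_N$ to an annihilated vacuum.
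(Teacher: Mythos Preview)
Your overall strategy matches the paper's: reduce to the extended annihilation superalgebra, filter by $k$-th products, extract $V_N$ via Lemma~\ref{lemm2.5}, and invoke Lemma~\ref{lemm555}. However, there is a real gap at the step ``irreducibility of $V$ transfers to irreducibility of $V_N$,'' and it is precisely the step where the paper does extra work that you omit.

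Your claim that a proper $\mathfrak{p}(t_0,N')$-submodule $W\subsetneq V_N$ yields a proper $\mathcal{A}(\mathcal{S}(p))^e$-submodule $\C[\partial]W$ is not automatic. The obstruction is the operators of filtration level zero, namely $(L_i)_{(0)}=\bar L_{i,-1}$, $(G_k)_{(0)}=\bar G_{k,-1}$ and $(W_j)_{(0)}=\bar W_{j,0}$: these lie in $\mathcal{F}_0\setminus\mathcal{F}_1$, so $W$ being $\mathcal{F}_1$-stable says nothing about their action, and without controlling them you cannot conclude that $\C[\partial]W$ is closed under the full conformal action. The paper supplies the missing ingredient: it observes that $\partial-\tfrac{1}{p}\bar L_{0,-1}$ is an \emph{even central element} of $\mathcal{A}(\mathcal{S}(p))^e$, so by Schur's lemma it acts on the irreducible $V$ by a scalar, giving $\bar L_{0,-1}=p(\partial+\varrho)$ as operators. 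This makes $\C[\partial]W$ visibly $\bar L_{0,-1}$-stable, and since every remaining level-zero operator is a commutator $\bar L_{i,-1}=\tfrac{1}{p}[\bar L_{i,0},\bar L_{0,-1}]$, $\bar G_{i,-1}=\tfrac{1}{p}[\bar G_{i,0},\bar L_{0,-1}]$, $\bar W_{j,0}=\tfrac{1}{p}[\bar W_{j,1},\bar L_{0,-1}]$ with the first factor in $\mathcal{F}_1$, stability under all of $\mathfrak{gsb}_0$ follows. Your write-up never invokes this central element, so the transfer of irreducibility is unjustified as written.

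A second, smaller point: you flag the non-triviality of the $\mathfrak{p}(t_0,N')$-action on $V_N$ as ``the main obstacle'' but do not dispatch it. The paper handles this by a short case split: if $N=1$ then $\mathcal{F}_1V_N=0$ by definition, which is incompatible with the (now established) irreducibility and the minimality of $N$; hence $N\geq 2$, and then $V_N$ is a genuine non-trivial irreducible $\mathfrak{gsb}_1/\mathfrak{gsb}_N\cong\mathfrak{p}(t,N-2)$-module, so Lemma~\ref{lemm555} applies. You should add both the Schur/central-element step and this $N=1$ versus $N\geq 2$ discussion to close the argument.
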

\begin{proof}
It follows from any torsion module of $\C[\partial]$ is trivial as a module of Lie conformal superalgebra  that any finite non-trivial irreducible   $\mathcal{S}(p)$-module $V$ must be free as a $\C[\partial]$-module.
According to   Theorem \ref{5.2}, one can see that the $\lambda$-actions  of $L_i,$ $W_i$ and $G_i$ on $V$ are trivial for all $i\gg0$.
  Assume that $t\in\Z_+$ is the largest integer such that the $\lambda$-action  of $\mathcal{S}(p)_t$  on $V$ is non-trivial.
Thus $V$ can be regarded as a    finite non-trivial irreducible  conformal module over $\mathcal{S}(p)_{[t]}$.
Denote
$${\mathfrak{gsb}}=\big\{\bar L_{i,m},\bar W_{j,n},\bar G_{k,l},\partial\mid 0\leq i,j,k\leq t, n\in\Z_+,m,l\in\Z_+\cup\{-1\}\big\}.$$
Based on  Proposition \ref{pro2.4},
  the conformal  $\mathcal{S}(p)_{[t]}$-module  $V$ can be viewed as a module over the associated extended annihilation algebra
   $\mathfrak{gsb}=\mathcal{A}(\mathcal{S}(p)_{[t]})^e$, which   satisfies
\begin{eqnarray}\label{7.1}
\bar L_{i,m}v=\bar W_{j,n}v=\bar G_{k,l}v=0
\end{eqnarray}
for $0\leq i,j,k\leq t$, $m,n,l\gg0$, $v\in V$.
For later use, we write
$$\mathfrak{gsb}_z=\{\bar L_{i,m},\bar W_{j,n},\bar G_{k,l}\in\mathfrak{gsb}\mid 0\leq i,j,k\leq t,m,l\geq z-1,n\geq z\},\ \forall z\in\Z_+.$$
Then   $\mathfrak{gsb}_0=\mathcal{A}(\mathcal{S}(p)_{[t]})$   and $\mathfrak{gsb}\supset\mathfrak{gsb}_0\supset\mathfrak{gsb}_1\supset\cdots$.
It follows from the definition of extended annihilation algebra that the element  $\partial\in\mathfrak{gsb}$ satisfies $[\partial, \mathfrak{gsb}_z] =\mathfrak{gsb}_{z-1}$ for $z \geq1$. Denote
$$V_z=\{v\in V\mid \mathfrak{gsb}_zv=0\},\ \forall z\in\Z_+.$$
We observe that  $V_z\neq\emptyset$ for $z\gg0$ by \eqref{7.1}.  Assume that  $N\in \Z_+$ is the smallest integer such that $V_N\neq\emptyset$.

Firstly,   consider  $N=0$. Let $0\neq v\in V_0$. Then $\mathcal{U}(\mathfrak{gsb})v=\C[\partial]\mathcal{U}(\mathfrak{gsb}_0)v=\C[\partial]v$. Therefore,
from the irreducibility of $V$, we have  $V=\C[\partial]v$.
It is clear that $\mathfrak{gsb}_0$ acts trivially on $V$.
According to  Proposition \ref{pro2.4}, we know  that $V$ is a   trivial conformal $\mathcal{S}(p)$-module, which leads to a contradiction.

Secondly, consider  $N\geq1$.  Take $0\neq v\in V_N$.
From $$[\partial-\frac{1}{p}\bar L_{0,-1},L_{i,m}]=[\partial-\frac{1}{p}\bar L_{0,-1},W_{i,m}]=[\partial-\frac{1}{p}\bar L_{0,-1},G_{i,m}]=0$$
for $i,m\in\Z_+$, one can show that
$\partial-\frac{1}{p}\bar L_{0,-1}$ is an even central
element of $\mathfrak{gsb}$ by the definition of extended annihilation algebra. So there exists some $\varrho\in\C$ such that   $\bar L_{0,-1}v=p(\partial+\varrho)v$  by  Schur's Lemma of Lie superalgebra.
Moreover, according to  the following relations
 $$\bar L_{i,-1}v=\frac{1}{p}[\bar L_{i,0},\bar L_{0,-1}]v\  \mathrm{and} \  \bar G_{i,-1}v=\frac{1}{p}[\bar G_{i,0},\bar L_{0,-1}]v,$$
 we obtain that the action of $\mathfrak{gsb}_0$ on  $v$ is determined by $\mathfrak{gsb}_1$ and
$\partial$. Obviously, $V_N$ is $\mathfrak{gsb}_1$-invariant. By the irreducibility of $V$ and Lemma \ref{lemm2.5},
 we obtain that $V=\C[\partial]\otimes_{\C}V_N$  and $V_N$ is a non-trivial irreducible finite-dimensional $\mathfrak{gsb}_1$-module.

If $N=1$, from the definition of $V_1$,
     we  know that $V_1$ is a trivial $\mathfrak{gsb}_1$-module,   which gives a   contradiction.

If $N\geq 2$,   the module   $V_N$   can be viewed as a $\mathfrak{gsb}_1/ \mathfrak{gsb}_N$-module.
Note that $\mathfrak{gsb}_1/ \mathfrak{gsb}_N\cong\mathfrak{p}({t,N-2})$. Based on Lemma \ref{lemm555},
 one can see that   $V_N$ is $1$-dimensional or $1+1$-dimensional. Then  $V$ is free of rank $1$ or $1+1$  as a conformal module over $\mathcal{S}(p)$  by Proposition \ref{pro2.4}.
The lemma holds.
\end{proof}

Note that the $\mathcal{S}(p)_{\bar 0}$-module $V_{a,b,c,d}$ defined in Lemma \ref{5.1} can be regarded as
 an $\mathcal{S}(p)$-module by the trivial action  of $\mathcal{S}(p)_{\bar 1}$. Combining  Theorem \ref{77.1}, Proposition \ref{pro77.111} and Lemma \ref{lemm789},  we have the main result  of this paper as follows, which shows that the irreducible modules $V$  defined in Theorem  \ref{77.1} exhaust
  all non-trivial finite irreducible conformal modules over $\mathcal{S}(p)$.

\begin{theo}\label{7.111}
Let $V$ be a non-trivial finite irreducible conformal module over  $\mathcal{S}(p)$.
 \begin{itemize}\parskip-7pt
 \item[{\rm (1)}]
  If $p\neq-1$, then
  \begin{eqnarray*}
V\cong
\begin{cases}
V_{a,b,0,d} &\ \mbox{for}\  (a,d)\neq(0,0),b\in\C,\\[4pt]
V_{a,b,0,d,\sigma} &\ \mbox{for}\  (a,d)\neq(0,0),b\in\C,\sigma\in\C^*,\\[4pt]
V_{a,b,\sigma}&\  \mbox{for} \ a,\sigma\in\C^*,b\in\C,\\[4pt]
\Pi({V}_{a-1,b,0,d-1,\sigma})&\  \mbox{for} \ (a,d)\neq(1,1),b\in\C,\sigma\in\C^*,\\[4pt]
\Pi({V}_{a,b,\sigma})&\  \mbox{for} \ a,\sigma\in\C^*,b\in\C;
\end{cases}\\
\end{eqnarray*}
 \item[{\rm (2)}]
 If $p=-1$, then
    \begin{eqnarray*}
V\cong
\begin{cases}
V_{a,b,c,d} &\ \mbox{for}\  (a,c,d)\neq(0,0,0),b\in\C,\\[4pt]
V_{a,b,c,d,\sigma} &\ \mbox{for}\  (a,c,d)\neq(0,0,0),b\in\C,\sigma\in\C^*,\\[4pt]
V_{a,b,\sigma}&\  \mbox{for} \ a,\sigma\in\C^*,b\in\C,\\[4pt]
\Pi({V}_{a-1,b,c,d-1,\sigma})&\  \mbox{for} \ (a,c,d)\neq(1,0,1),b\in\C,\sigma\in\C^*,\\[4pt]
\Pi({V}_{a,b,\sigma})&\  \mbox{for} \ a,\sigma\in\C^*,b\in\C.
\end{cases}\\
\end{eqnarray*}
\end{itemize}
\end{theo}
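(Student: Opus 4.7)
The plan is to assemble the preceding results into the final classification. By Lemma \ref{lemm789}, any finite non-trivial irreducible conformal $\mathcal{S}(p)$-module $V$ is free of rank $1$ or $1+1$ over $\C[\partial]$, so I would split the argument according to these two possibilities.

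In the rank $1$ case, $V$ is generated by a single homogeneous vector, so either $V_{\bar 0}$ or $V_{\bar 1}$ is zero. Since every $G_i$ exchanges parities, the entire odd part of $\mathcal{S}(p)$ then acts trivially on $V$, and the module structure is governed by $\mathcal{S}(p)_{\bar 0}$ alone. Lemma \ref{5.1} identifies $V$ (up to an application of $\Pi$) with some $V_{a,b,c,d}$, with $c=0$ forced whenever $p\neq -1$ by the factor $\delta_{p+1,0}$. A direct check shows that $\C[\partial](\partial+b)v$ is a proper submodule precisely when every non-derivation piece of the action vanishes, namely $(a,d)=(0,0)$ for $p\neq -1$ and $(a,c,d)=(0,0,0)$ for $p=-1$; excluding those values yields the first families listed in items (1) and (2).

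In the rank $1+1$ case, Theorem \ref{77.1} already enumerates the five possible isomorphism types of module structures, namely $V_{a,b,c,d,a',b',c',d'}$, $V_{a,b,c,d,\sigma}$, $V_{a,b,\sigma}$, $\Pi(V_{a-1,b,c,d-1,\sigma})$ and $\Pi(V_{a,b,\sigma})$, with the understanding that $c=c'=0$ when $p\neq -1$. Proposition \ref{pro77.111} then records exactly when each of these is irreducible: the first type is always reducible, while the remaining four are irreducible precisely under the respective conditions $(a,c,d)\neq(0,0,0)$, $a\neq 0$, $(a,c,d)\neq(1,0,1)$, and $a\neq 0$, which collapse to $(a,d)\neq(0,0)$ and $(a,d)\neq(1,1)$ when $p\neq -1$. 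Combining this list with the rank $1$ output produces both items of the theorem.

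The main obstacle is essentially bookkeeping: all of the heavy lifting has already been carried out in Lemmas \ref{5.1} and \ref{lemm789}, Theorem \ref{77.1}, and Proposition \ref{pro77.111}. The only care needed in the synthesis is to verify that no module is double-counted across the rank $1$ and rank $1+1$ families, which is automatic from their distinct $\C[\partial]$-ranks, and to keep the $\delta_{p+1,0}$ dichotomy consistent throughout, after which items (1) and (2) read off directly from the case analysis above.
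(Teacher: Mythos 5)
Your proposal is correct and follows essentially the same route as the paper: the paper's proof is precisely the synthesis of Lemma \ref{lemm789} (rank $1$ or $1+1$), Lemma \ref{5.1} with trivial action of $\mathcal{S}(p)_{\bar 1}$ for the rank-$1$ case, and Theorem \ref{77.1} together with Proposition \ref{pro77.111} for the rank-$1+1$ case, with the $\delta_{p+1,0}$ factor producing the $p\neq-1$ versus $p=-1$ dichotomy. Your additional remarks (the odd generators annihilate a rank-$1$ module, the submodule $\C[\partial](\partial+b)v$ detecting reducibility when $(a,c,d)=(0,0,0)$) are consistent with, and if anything slightly more explicit than, the paper's own argument.
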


\section{Generalized version  of $\mathcal{S}(p)$}
The aim of this section is to   construct a class of new  Lie conformal superalgebras
as  an  extended case  of $\mathcal{S}(p)$.

Now,   we  continue to apply the     $\mathfrak{B}(p,0,p)$-module $V(\alpha_1,\beta_1,\gamma_1,p)$   in \eqref{v111}  and define a   super module $V=V_{\bar 0}\oplus V_{\bar 1}$, where $V_{\bar0}=\oplus_{i\in\Z}\C[\partial]v_{\bar0}^i$ and $V_{\bar1}=\oplus_{i\in\Z}\C[\partial]v_{\bar1}^i$.
More precisely, for  $\alpha_1,\beta_1,\gamma_1,\widehat{\alpha}_1,\widehat{\beta}_1,\widehat{\gamma_1}\in\C$, $p \in\C^*$,   the   $\C[\partial]$-module $$V(\alpha_1,\beta_1,\gamma_1,\widehat{\alpha}_1,\widehat{\beta}_1,\widehat{\gamma_1},p)
=\oplus_{i\in\Z}\C[\partial]v_{\bar0}^i\bigoplus\oplus_{i\in\Z}\C[\partial]v_{\bar1}^i$$  is a $\Z$-graded free intermediate
series module of rank two over $\mathfrak{B}(p,0,p)$ with  $\lambda$-actions as follows:
\begin{eqnarray*}
 &&L_i\, {}_\lambda \, v_{\bar0}^j=\Big((i+p)(\partial+\beta_1)+(i+j+\alpha_1)\lambda\Big)v_{\bar0}^{i+j},\
 W_i\, {}_\lambda \, v_{\bar0}^j=\gamma_1 v_{\bar0}^{i+j},
 \\&&L_i\, {}_\lambda \, v_{\bar1}^j=\Big((i+p)(\partial+\widehat{\beta}_1)+(i+j+\widehat{\alpha}_1)\lambda\Big)v_{\bar1}^{i+j},\
 W_i\, {}_\lambda \, v_{\bar1}^j=\widehat{\gamma_1} v_{\bar1}^{i+j}.
\end{eqnarray*}
We consider a $\Z_2$-graded $\C[\partial]$-module
$$\mathcal{GS}(\alpha_1,\beta_1,\gamma_1,\widehat{\alpha}_1,\widehat{\beta}_1,
\widehat{\gamma_1},\{\phi_{i,j},\varphi_{i,j}\})=\mathcal{GS}_{\bar 0}\oplus\mathcal{GS}_{\bar 1}$$
with
$\mathcal{GS}_{\bar 0}=\oplus_{i\in\Z_+}\C[\partial]L_i\bigoplus\oplus_{i\in\Z_+}\C[\partial]W_i$,
$\mathcal{GS}_{\bar 1}=\oplus_{i\in\Z_+}\C[\partial]G_i\bigoplus\oplus_{i\in\Z_+}\C[\partial]H_i$ and satisfying
\begin{eqnarray*}
&&[L_i\, {}_\lambda \, L_j]=\big((i+p)\partial+(i+j+2p)\lambda \big) L_{i+j},
\\&& [L_i\, {}_\lambda \, W_j]=\Big((i+p)(\partial+\beta)+(i+j+\alpha)\lambda\Big) W_{i+j}
\\&&  [L_i\, {}_\lambda \, G_j]=\Big((i+p)(\partial+\beta_1)+(i+j+\alpha_1)\lambda\Big) G_{i+j},
\\&&  [W_i\, {}_\lambda \, G_j]=\gamma_1 G_{i+j},
\\&&  [L_i\, {}_\lambda \, H_j]=\Big((i+p)(\partial+\widehat{\beta}_1)+(i+j+\widehat{\alpha}_1)\lambda\Big) H_{i+j},
\\&&  [W_i\, {}_\lambda \, H_j]=\widehat{\gamma_1} H_{i+j},
\\&& [G_i\, {}_\lambda \, H_j]=\phi_{i,j}(\partial,\lambda)W_{i+j}+\varphi_{i,j}(\partial,\lambda)L_{i+j},
\\&& [W_i\, {}_\lambda \, W_j]= [G_i\, {}_\lambda \, G_j]= [H_i\, {}_\lambda \, H_j]=0,
\end{eqnarray*}
where  $\phi_{i,j}(\partial,\lambda),\varphi_{i,j}(\partial,\lambda)\in\C[\partial,\lambda]$  for $i,j\in\Z_+$.
\begin{lemm}\label{le51}
 Let $p\in\C^*,\alpha_1=2p,\widehat{\alpha}_1=p,\beta_1=\widehat{\beta}_1=0,\gamma_1=1$ and $\widehat{\gamma}_1=-1$.  Then $\Z_2$-graded $\C[\partial]$-module  $\mathcal{GS}(\alpha_1,\beta_1,\gamma_1,\widehat{\alpha}_1,\widehat{\beta}_1,
\widehat{\gamma}_1,\{\phi_{i,j},\varphi_{i,j}\})$ becomes a Lie conformal superalgebra if and only if
 $\phi_{i,j}(\partial,\lambda)=\Delta$ and $\varphi_{i,j}(\partial,\lambda)=\Delta\big((i+p)\partial+(i+j+p)\lambda\big)$ for $i,j\in\Z_+,\Delta\in\C$.
\end{lemm}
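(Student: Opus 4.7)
The plan is to verify all Jacobi identities for the proposed bracket structure of $\mathcal{GS}$, noting that most are automatic and only those involving the new bracket $[G_i \, {}_\lambda \, H_j]$ produce genuine constraints on $\phi_{i,j}$ and $\varphi_{i,j}$. The brackets among $L_i, W_i, G_i$ alone form a copy of $\mathcal{S}(p)$, which is known to be a Lie conformal superalgebra; similarly, the brackets among $L_i, W_i, H_i$ have the same structural form (with $\widehat{\alpha}_1, \widehat{\beta}_1, \widehat{\gamma}_1$ in place of $\alpha_1, \beta_1, \gamma_1$), so the Jacobi identities for triples from either subset hold automatically. Skew-symmetry $[G_i \, {}_\lambda \, H_j] = [H_j \, {}_{-\lambda-\partial} \, G_i]$ (both factors odd) merely defines the reverse bracket and imposes no constraint on $\phi, \varphi$. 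Triples involving two $H$'s and no $G$, such as $(L_i, H_j, H_k)$ or $(W_i, H_j, H_k)$, trivialize automatically, since the iterated brackets produce $H$-valued expressions and $[H_m \, {}_\lambda \, H_n] = 0$.

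The identities that do constrain $\phi_{i,j}$ and $\varphi_{i,j}$ are those of $(W_k, G_i, H_j)$, $(L_k, G_i, H_j)$, $(G_k, G_i, H_j)$, and $(G_i, H_j, H_k)$. I would proceed in this order. First, the $(W_k, G_i, H_j)$ Jacobi, expanded using $[W_k \, {}_\lambda \, G_i] = G_{k+i}$, $[W_k \, {}_\lambda \, H_j] = -H_{k+j}$, the skew-symmetric expression for $[W_k \, {}_\lambda \, L_{i+j}]$, and $[W_k \, {}_\lambda \, W_{i+j}] = 0$, produces a relation between $\phi_{k+i,j}, \phi_{i,k+j}, \varphi_{k+i,j}, \varphi_{i,k+j}$ that controls how these quantities change under shifting the indices by $k$. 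Next, the $(L_k, G_i, H_j)$ Jacobi is the central computation: expanding both sides, using the axioms $[a \, {}_\lambda \, \partial b] = (\partial+\lambda)[a \, {}_\lambda \, b]$ and $[\partial a \, {}_\lambda \, b] = -\lambda[a \, {}_\lambda \, b]$, and matching the $L_{k+i+j}$- and $W_{k+i+j}$-coefficients separately yields polynomial identities that, combined with the previous relations, force $\phi_{i,j}$ to be the constant $\Delta$ and $\varphi_{i,j}$ to take the stated affine-linear form $\Delta((i+p)\partial + (i+j+p)\lambda)$. The remaining $(G_k, G_i, H_j)$ and $(G_i, H_j, H_k)$ identities, using $[G_m \, {}_\lambda \, G_n] = 0$ and $[H_m \, {}_\lambda \, H_n] = 0$ respectively, then reduce to symmetry conditions on $\phi, \varphi$ that are automatically satisfied by the derived forms, confirming that no further constraints arise.

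For the converse (``if'') direction, substituting the explicit formulas into each Jacobi identity reduces to a direct polynomial verification: the cancellations on both sides match term by term once one expands $(i+p)\partial$ and $(i+j+p)\lambda$ using the conformal axioms. This reflects the structural similarity to the $N=2$ superconformal algebra mentioned in Subsection 3.1, where $G^+$ and $G^-$ bracket to a linear combination of the Virasoro and Heisenberg currents.

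The main obstacle is the bookkeeping in the $(L_k, G_i, H_j)$ Jacobi, which produces a polynomial equation of total degree two in $\partial$ with two independent components along $L_{k+i+j}$ and $W_{k+i+j}$. To handle this I would first specialize to small indices (say $k = 0$) and extract the highest-degree terms in $\lambda, \mu$ to pin down the polynomial degrees of $\phi_{i,j}$ and $\varphi_{i,j}$; then peel off lower-order coefficients iteratively to determine their precise shape; and finally re-instate the full $i, j, k$ dependence using the $(W_k, G_i, H_j)$ identity. The careful tracking of how $\partial$ acts through the conformal axioms (especially the interplay $[a \, {}_\lambda \, \partial b] = (\partial+\lambda)[a \, {}_\lambda \, b]$ on the inner bracket) is where most of the subtlety lies.
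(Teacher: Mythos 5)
Your proposal is correct and takes essentially the same approach as the paper: the paper likewise extracts the constraints from the Jacobi identities for the triples $(W_0,G_i,H_j)$ and $(L_0,G_i,H_j)$, solves the resulting functional equations by specializing $\mu=0$ and letting $\lambda\to 0$ to pin down the degrees and forms of $\phi_{i,j}$ and $\varphi_{i,j}$, and treats sufficiency as a direct verification. The only differences are bookkeeping ones (the paper uses only the index-$0$ triples, while you additionally invoke $(W_k,G_i,H_j)$ for general $k$ and the $(G,G,H)$- and $(G,H,H)$-type identities as consistency checks), which do not change the argument.
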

\begin{proof}
It follows from Definition \ref{D1} that the sufficiency is clear.

 We only need to prove the necessity.
   Assume that $\mathcal{GS}(\alpha_1,\beta_1,\gamma_1,\widehat{\alpha}_1,\widehat{\beta}_1,
\widehat{\gamma}_1,\{\phi_{i,j},\varphi_{i,j}\})$ is a Lie conformal superalgebra.
   For any $i, j\in \Z_+$, using the Jacobi identity for triple $(W_0, G_i, H_j)$, we have
   \begin{eqnarray}
   \label{1555.6} &&\varphi_{i,j}(\partial,\lambda+\mu)=\varphi_{i,j}(\partial,\mu),
\\&& \label{1555.66}(i+j+p)\lambda\varphi_{i,j}(\partial+\lambda,\mu)
  =\phi_{i,j}(\partial,\lambda+\mu)-\phi_{i,j}(\partial,\mu).
\end{eqnarray}
From  \eqref{1555.6}, one has
\begin{eqnarray}
   \label{1555.699} \varphi_{i,j}(\partial,\lambda)=\varphi_{i,j}(\partial,0)
\end{eqnarray}
for $i,j\in\Z_+.$
  For any $i, j\in \Z_+$, by the Jacobi identity for triple $(L_0, G_i, H_j)$, we check that
   \begin{eqnarray}
 \label{555.6} \nonumber &&\big(p\partial+(i+j+p)\lambda\big)\phi_{i,j}(\partial+\lambda,\mu)
   \\&=&\big((i+p)\lambda-p\mu\big)\phi_{i,j}(\partial,\lambda+\mu)
  +\big(p(\partial+\mu)+(j+p)\lambda\big)\phi_{i,j}(\partial,\mu),
  \\&& \nonumber  \label{555.67}  \big(p\partial+(i+j+2p)\lambda\big)\varphi_{i,j}(\partial+\lambda,\mu)
   \\&=&\big((i+p)\lambda-p\mu\big)\varphi_{i,j}(\partial,\lambda+\mu)
  +\big(p(\partial+\mu)+(j+p)\lambda\big)\varphi_{i,j}(\partial,\mu).
\end{eqnarray}
 Setting $\mu=0$ in \eqref{555.67} and  using     \eqref{1555.699},
 we conclude that
 $\varphi_{i,j}(\partial,\lambda)=\Delta\in\C$ for  any $i,j\in\Z_+$.
We take $\mu=0$ in \eqref{1555.66},
one can write
 \begin{eqnarray}\label{p566}
\phi_{i,j}(\partial,\lambda)=\phi_{i,j}(\partial,0)+ \Delta(i+j+p)\lambda
\end{eqnarray}
By choosing $\mu=0$ in \eqref{555.6}, it is easy to show that
 \begin{eqnarray} \label{76y}
 \nonumber && p\partial\frac{\big(\phi_{i,j}(\partial+\lambda,0)-\phi_{i,j}(\partial,0)\big)}{\lambda}
   \\&=&(i+p)\phi_{i,j}(\partial,\lambda)
  + (j+ p)\phi_{i,j}(\partial,0)-(i+j+p)\phi_{i,j}(\partial+\lambda,0).
\end{eqnarray}
Taking $\lambda\rightarrow0$, we have $\partial\frac{d}{d\partial}\phi_{i,j}(\partial,0)=\phi_{i,j}(\partial,0)$,
which has a nonzero solution $\phi_{i,j}(\partial,0)= \Delta_0(i+p)\partial$ for $\Delta_0\in\C$.
Then \eqref{p566} can be written as
 \begin{eqnarray}\label{p577}
\phi_{i,j}(\partial,\lambda)=\Delta_0(i+p)\partial+ \Delta(i+j+p)\lambda
\end{eqnarray}
Inserting \eqref{p577}
into \eqref{76y}, we immediately  obtain $\Delta=\Delta_0,$
which yields
$\phi_{i,j}(\partial,\lambda)=\Delta\big((i+p)\partial+(i+j+p)\lambda\big)$ for $i,j\in\Z_+,\Delta\in\C$. We complete the proof.

\end{proof}

\begin{rema}
Up to isomorphism, we may assume that $\Delta=2$
  in  Lemma \ref{le51} for  $\Delta\neq0$. Then we can define a class of Lie conformal superalgebras $\mathcal{GS}(p)=\mathcal{GS}_{\bar 0}\oplus\mathcal{GS}_{\bar 1}$
with $$\mathcal{GS}_{\bar 0}=\oplus_{i\in\Z_+}\C[\partial]L_i\bigoplus\oplus_{i\in\Z_+}\C[\partial]W_i,\ \mathcal{GS}_{\bar 1}=\oplus_{i\in\Z_+}\C[\partial]G_i\bigoplus\oplus_{i\in\Z_+}\C[\partial]H_i$$
and the following non-trivial $\lambda$-brackets
 \begin{eqnarray*}
&&[L_i\, {}_\lambda \, L_j]=\big((i+p)\partial+(i+j+2p)\lambda \big) L_{i+j},
\\&&[L_i\, {}_\lambda \, W_j]=\Big((i+p)\partial+(i+j+p)\lambda\Big) W_{i+j},
\\&&[L_i\, {}_\lambda \, G_j]=\Big((i+p)\partial+(i+j+2p)\lambda\Big) G_{i+j},
\\&&[L_i\, {}_\lambda \, H_j]=\Big((i+p)\partial+(i+j+p)\lambda\Big) H_{i+j},
\\&&  [W_i\, {}_\lambda \, G_j]=G_{i+j},\ [W_i\, {}_\lambda \, H_j]=-H_{i+j},
 \\&&[G_i\, {}_\lambda \, H_j]=2L_{i+j}+2\Big((i+p)\partial+(i+j+p)\lambda\Big)W_{i+j}.
\end{eqnarray*}
Denote the subalgebra
 $\C[\partial]L_0\oplus\C[\partial]W_0\oplus\C[\partial]G_0\oplus\C[\partial]H_0$ of $\mathcal{GS}(p)$ by
 $\mathcal{SN}$.
Define the following $\C[\partial]$-module homomorphism from $\mathcal{SN}$ to the Lie conformal algebra of $N=2$ superconformal algebra $\mathrm{(}$see \cite{CL}$\mathrm{)}$:
$$\frac{1}{p}L_0+\frac{1}{2}\partial W_0\rightarrow L,\ W_0\rightarrow J,\ G_0\rightarrow G_+,\ \frac{1}{p}H_0\rightarrow G_-.$$
It is easy to check that the subalgebra
 $\mathcal{SN}$ is isomorphic to the Lie conformal algebra of $N=2$ superconformal algebra.

\end{rema}
   A class of infinite-dimensional  Lie superalgebras related to Block type Lie algebra are presented in the rest of this section, which have a subalgebra called topological $N=2$ superconformal algebra.
\begin{lemm}
 The annihilation  superalgebra  of $\mathcal{GS}(p)$ is given by
$$\mathcal{A}(\mathcal{GS}(p))=\Big\{L_{i,m},W_{j,n},G_{k,l},H_{p,q}\mid i,j,k,n\in \Z_+,m,l\in\Z_+\cup\{-1\}\Big\}$$
with    non-vanishing    relations:
\begin{equation*}
\aligned
&[L_{i,m},L_{j,n}]=\big((m+1)(j+p)-(n+1)(i+p)\big)L_{i+j,m+n},
\\&[L_{i,m},W_{j,n}]=\big((m+1)j-n(i+p)\big)W_{i+j,m+n},
\\&[L_{i,m},G_{j,n}]=\big((m+1)(j+p)-(n+1)(i+p)\big)G_{i+j,m+n},
\\&[L_{i,m},H_{j,n}]=\big((m+1)j-n(i+p)\big)H_{i+j,m+n},
\\&  [G_{i,m},H_{j,n}]=2L_{i+j,m+n}+2\big((m+1)j-n(i+p)\big)W_{i+j,m+n},
\\&  [W_{i,m},G_{j,n}]=G_{i+j,m+n},
\ [W_{i,m},H_{j,n}]=-H_{i+j,m+n},
\endaligned
\end{equation*}where $p\in\C^*$.
\end{lemm}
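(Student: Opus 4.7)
The plan is to follow the same template as the proof of Lemma \ref{4.1}, since the annihilation superalgebra is built from the $k$-th products via the universal formula $[a_{(m)},b_{(n)}]=\sum_{k\in\Z_+}\binom{m}{k}(a_{(k)}b)_{(m+n-k)}$ together with $(\partial a)_{(n)}=-n a_{(n-1)}$. I would first read off all non-trivial $k$-th products among the generators $L_i,W_j,G_k,H_l$ directly from the $\lambda$-brackets listed in the remark preceding the lemma. The bracket $[L_i{}_\lambda H_j]=((i+p)\partial+(i+j+p)\lambda)H_{i+j}$ has exactly the same shape as $[L_i{}_\lambda W_j]$, so the $H$-case mirrors the $W$-case and produces only $L_i{}_{(0)}H_j=(i+p)\partial H_{i+j}$ and $L_i{}_{(1)}H_j=(i+j+p)H_{i+j}$. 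Similarly, $W_i{}_{(0)}H_j=-H_{i+j}$ is the only non-zero $k$-th product for that pair.

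The one genuinely new computation is the $G$-$H$ pair. Expanding $[G_i{}_\lambda H_j]=2L_{i+j}+2((i+p)\partial+(i+j+p)\lambda)W_{i+j}$ in powers of $\lambda$ gives
\begin{equation*}
G_i{}_{(0)}H_j=2L_{i+j}+2(i+p)\partial W_{i+j},\quad G_i{}_{(1)}H_j=2(i+j+p)W_{i+j},\quad G_i{}_{(k)}H_j=0\ (k\geq2).
\end{equation*}
Plugging into the universal formula and using $(\partial W_{i+j})_{(m+n)}=-(m+n)(W_{i+j})_{(m+n-1)}$, I get
\begin{equation*}
[(G_i)_{(m)},(H_j)_{(n)}]=2(L_{i+j})_{(m+n)}+2\bigl(m(i+j+p)-(m+n)(i+p)\bigr)(W_{i+j})_{(m+n-1)},
\end{equation*}
and the coefficient simplifies to $2(mj-n(i+p))$, exactly the shape needed.

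Finally, I would make the change of variables $L_{i,m}=(L_i)_{(m+1)}$, $G_{k,l}=(G_k)_{(l+1)}$, $W_{j,n}=(W_j)_{(n)}$, $H_{p,q}=(H_p)_{(q)}$ (the shifting convention is dictated by the range $m,l\in\Z_+\cup\{-1\}$ for the derivations and $n,q\in\Z_+$ for the Heisenberg-type generators, matching Lemma \ref{4.1}). After substitution, the coefficients $(m+1)(j+p)-(n+1)(i+p)$, $(m+1)j-n(i+p)$, etc., appear exactly as stated.

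I do not expect a real obstacle: all $L$-$L$, $L$-$W$, $L$-$G$, $W$-$G$ and vanishing $W$-$W$, $G$-$G$ relations are carried over verbatim from Lemma \ref{4.1}, and the $H$-type relations are parallel to those for $W$ and $G$ by the symmetry of the $\lambda$-brackets. The only point that needs care is bookkeeping the two summands arising from $[G_i{}_\lambda H_j]$ and verifying that the $\partial$-coming contribution $-2(i+p)(m+n)$ combines correctly with $2m(i+j+p)$ to reproduce the stated coefficient $2((m+1)j-n(i+p))$ after the index shift $m\mapsto m+1$. That is a one-line algebraic check, and the lemma follows.
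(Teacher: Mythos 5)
Your proposal is correct and follows exactly the route the paper intends: the paper itself omits the details, stating only that the proof is analogous to Lemma \ref{4.1}, and your computation of the $k$-th products (in particular $G_i{}_{(0)}H_j=2L_{i+j}+2(i+p)\partial W_{i+j}$, $G_i{}_{(1)}H_j=2(i+j+p)W_{i+j}$), the application of the universal bracket formula, and the index shifts $L_{i,m}=(L_i)_{(m+1)}$, $G_{k,l}=(G_k)_{(l+1)}$, $W_{j,n}=(W_j)_{(n)}$, $H_{j,n}=(H_j)_{(n)}$ reproduce all the stated relations, with the coefficient simplification $m(i+j+p)-(m+n)(i+p)=mj-n(i+p)$ checking out. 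No gaps.
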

\begin{proof}
Since the proof  is similar to Lemma  \ref{4.1}, we omit the details.
\end{proof}
\begin{rema}
We observe  that
  the  topological $N=2$ superconformal algebra  is isomorphic
to the Lie superalgebra   generated by
$\{L_{0,m},W_{0,n},G_{0,l},H_{0,q}\mid m,n,l,q\in\Z\}$ $\mathrm{(}$see \cite{DVV}$\mathrm{)}$.
\end{rema}

\section{Applications}
From  the definition  of  \eqref{b3.2},  we know that  $\mathcal{S}(p)_{[n]}$ has a  $\C[\partial]$-basis $\{\bar L_{i},\bar W_{i}, \bar G_i\mid 0\leq i\leq n\}$ with  the following
  $\lambda$-brackets:
\begin{eqnarray*}
&& [\bar L_i\, {}_\lambda \,\bar L_j]=\big((i+p)\partial+(i+j+2p)\lambda \big)\bar L_{i+j},
\\&&
[\bar L_i\, {}_\lambda \, \bar W_j]=\Big((i+p)\partial+(i+j+p)\lambda\Big) \bar W_{i+j},
\\&&[\bar L_i\, {}_\lambda \, \bar G_j]=\Big((i+p) \partial +(i+j+2p)\lambda\Big)\bar G_{i+j},
\\&&
[\bar W_i\, {}_\lambda \, \bar G_j]=\bar G_{i+j},\ [\bar W_i\, {}_\lambda \, \bar W_j]=[\bar G_i\, {}_\lambda \, \bar G_j]=0
\end{eqnarray*}
for $i,j\in\Z_+,p\in\C^*$ ($i+j>n$ the above relations are trivial). It follows from
that
$\mathcal{S}(p)_{[0]}\cong \mathfrak{sh},\mathfrak{s}(n)=\mathcal{S}(-n)_{[n]}$ for $n\geq1$
(also see \eqref{b3.2}, \eqref{bn3.2}).
Now we define   $\C[\partial]$-modules $\bar V_{a,b,0,d}$, $\bar V_{a,b,0,d,\sigma}$ and  $\bar V_{a,b,\sigma}$   over  $\mathfrak{s}(n)$ for $n>1$ as follows.
\begin{itemize}\parskip-7pt
\item[{\rm (1)}] $\bar V_{a,b,0,d}=\C[\partial]v$ with
 \begin{eqnarray*}
\\&&\begin{cases}
\bar L_0\,{}_\lambda\, v=-n(\partial+a\lambda+b)v,\\[4pt]
 \bar W_0\,{}_\lambda\, v=d v, \\[4pt]
\bar G_i\,{}_\lambda\,v=0, \ 0\leq i\leq n,\\[4pt]
\bar W_j\,{}_\lambda\,v=0,\ 1\leq j\leq n, \\[4pt]
\bar L_k\,{}_\lambda\,v=0,\ 1\leq k\leq n,
\end{cases}
\end{eqnarray*}
where $a,b,d\in\C$;
\item[{\rm (2)}] $\bar V_{a,b,0,d,\sigma}=\C[\partial]v_{\bar 0}\oplus\C[\partial]v_{\bar 1}$ with
 \begin{eqnarray*}
\\&&\begin{cases}
\bar L_0\,{}_\lambda\, v_{\bar0}=-n(\partial+a\lambda+b)v_{\bar0},\\[4pt]
 \bar W_0\,{}_\lambda\, v_{\bar0}=d v_{\bar0}, \\[4pt]
  \bar G_0\,{}_\lambda\, v_{\bar0}=\sigma v_{\bar1}, \\[4pt]
\bar G_i\,{}_\lambda\,v_{\bar0}=0, \ 1\leq i\leq n,\\[4pt]
\bar W_j\,{}_\lambda\,v_{\bar0}=0,\ 1\leq j\leq n, \\[4pt]
\bar L_k\,{}_\lambda\,v_{\bar0}=0,\ 1\leq k\leq n,
\end{cases}\   \mathrm{and} \
 \begin{cases}
\bar L_0\,{}_\lambda\, v_{\bar1}=-n(\partial+(a+1)\lambda+b)v_{\bar1},\\[4pt]
\bar W_0\,{}_\lambda\, v_{\bar1}=(d+1) v_{\bar1}, \\[4pt]
\bar G_i\,{}_\lambda\,v_{\bar1}=0,\ 0\leq i\leq n,\\[4pt]
\bar W_j\,{}_\lambda\,v_{\bar1}=0,\ 1\leq j\leq n,\\[4pt]
\bar L_k\,{}_\lambda\,v_{\bar1}=0,\ 1\leq k\leq n,
\end{cases}\\
\end{eqnarray*}
where $a,b,d\in\C,\sigma\in\C^*$;
 \item[{\rm (3)}] $\bar V_{a,b,\sigma}=\C[\partial]v_{\bar 0}\oplus\C[\partial]v_{\bar 1}$ with
  \begin{eqnarray*}
\\&&\begin{cases}
\bar L_0\,{}_\lambda\, v_{\bar0}=-n(\partial+a\lambda+b)v_{\bar0},\\[4pt]
 \bar W_0\,{}_\lambda\, v_{\bar0}=(a-1) v_{\bar0}, \\[4pt]
 \bar  G_0\,{}_\lambda\, v_{\bar0}=\sigma\big(\partial+a\lambda+b\big) v_{\bar1}, \\[4pt]
\bar G_i\,{}_\lambda\,v_{\bar0}=0,\ 1\leq i\leq n,  \\[4pt]
\bar W_j\,{}_\lambda\,v_{\bar0}=0,\ 1\leq j\leq n, \\[4pt]
\bar L_k\,{}_\lambda\,v_{\bar0}=0,\ 1\leq k\leq n,
\end{cases}\  \mathrm{and} \
 \begin{cases}
\bar L_0\,{}_\lambda\, v_{\bar1}=-n(\partial+a\lambda+b)v_{\bar1},\\[4pt]
 \bar W_0\,{}_\lambda\, v_{\bar1}=a v_{\bar1}, \\[4pt]
\bar G_i\,{}_\lambda\,v_{\bar1}=0, \ 0\leq i\leq n,\\[4pt]
\bar W_j\,{}_\lambda\,v_{\bar1}=0, \ 1\leq j\leq n, \\[4pt]
\bar L_k\,{}_\lambda\,v_{\bar1}=0,\ 1\leq k\leq n,
\end{cases}\\
\end{eqnarray*}
where $a,b\in\C,\sigma\in\C^*$.
\end{itemize}
The    $\C[\partial]$-modules  $\bar{\bar V}_{a,b,c,d}$, $\bar{\bar V}_{a,b,c,d,\sigma}$ and $\bar{\bar V}_{a,b,\sigma}$     over  $\mathfrak{s}(1)$ are given.
\begin{itemize}\parskip-7pt
\item[{\rm (1)}] $\bar{\bar V}_{a,b,c,d}=\C[\partial]v$ with
 \begin{eqnarray*}
\\&&\begin{cases}
L_0\,{}_\lambda\, v=-(\partial+a\lambda+b)v,\\[4pt]
 L_1\,{}_\lambda\,v=c v,\\[4pt]
 W_0\,{}_\lambda\, v=d v, \\[4pt]
  G_i\,{}_\lambda\, v=0,  \ 0\leq i\leq 1, \\[4pt]
W_1\,{}_\lambda\,v=0, \\[4pt]
\end{cases}
\end{eqnarray*}
where $a,b,c,d\in\C$;
 \item[{\rm (2)}] $\bar{\bar V}_{a,b,c,d,\sigma}=\C[\partial]v_{\bar 0}\oplus\C[\partial]v_{\bar 1}$ with
 \begin{eqnarray*}
\\&&\begin{cases}
L_0\,{}_\lambda\, v_{\bar0}=-(\partial+a\lambda+b)v_{\bar0},\\[4pt]
 L_1\,{}_\lambda\,v_{\bar0}=c v_{\bar0},\\[4pt]
 W_0\,{}_\lambda\, v_{\bar0}=d v_{\bar0}, \\[4pt]
  G_0\,{}_\lambda\, v_{\bar0}=\sigma v_{\bar1}, \\[4pt]
G_1\,{}_\lambda\,v_{\bar0}=W_1\,{}_\lambda\,v_{\bar0}=0, \\[4pt]
\end{cases}\   \mathrm{and} \
 \begin{cases}
L_0\,{}_\lambda\, v_{\bar1}=-(\partial+(a+1)\lambda+b)v_{\bar1},\\[4pt]
 L_1\,{}_\lambda\,v_{\bar1}=c v_{\bar1},\\[4pt]
 W_0\,{}_\lambda\, v_{\bar1}=(d+1) v_{\bar1}, \\[4pt]
G_i\,{}_\lambda\,v_{\bar1}=0,\ 0\leq i\leq 1,\\[4pt]
W_1\,{}_\lambda\,v_{\bar1}=0,
\end{cases}\\
\end{eqnarray*}
where $a,b,c,d\in\C,\sigma\in\C^*$;
\item[{\rm (3)}] $\bar{\bar V}_{a,b,\sigma}=\C[\partial]v_{\bar 0}\oplus\C[\partial]v_{\bar 1}$ with
  \begin{eqnarray*}
\\&&\begin{cases}
L_0\,{}_\lambda\, v_{\bar0}=-(\partial+a\lambda+b)v_{\bar0},\\[4pt]
 W_0\,{}_\lambda\, v_{\bar0}=(a-1) v_{\bar0}, \\[4pt]
  G_0\,{}_\lambda\, v_{\bar0}=\sigma\big(\partial+a\lambda+b\big) v_{\bar1}, \\[4pt]
G_1\,{}_\lambda\,v_{\bar0}=
W_1\,{}_\lambda\,v_{\bar0}=
L_1\,{}_\lambda\,v_{\bar0}=0,
\end{cases}\  \mathrm{and} \
 \begin{cases}
L_0\,{}_\lambda\, v_{\bar1}=-(\partial+a\lambda+b)v_{\bar1},\\[4pt]
 W_0\,{}_\lambda\, v_{\bar1}=a v_{\bar1}, \\[4pt]
G_i\,{}_\lambda\,v_{\bar1}=0, \ 0\leq i\leq 1,\\[4pt]
W_1\,{}_\lambda\,v_{\bar1}=L_1\,{}_\lambda\,v_{\bar1}=0,
\end{cases}\\
\end{eqnarray*}
where $a,b\in\C,\sigma\in\C^*$.
\end{itemize}
The following   $\C[\partial]$-modules $\bar{\bar{\bar V}}_{a,b,0,d}$,   $\bar{\bar{\bar V}}_{a,b,0,d,\sigma}$ and $\bar{\bar{\bar V}}_{a,b,\sigma}$   over  $\mathfrak{sh}$ are presented.
\begin{itemize}\parskip-7pt
\item[{\rm (1)}] $\bar{\bar{\bar V}}_{a,b,0,d}=\C[\partial]v$ with
 \begin{eqnarray*}
\\&&\begin{cases}
(\frac{1}{p}L_0)\,{}_\lambda\, v=(\partial+a\lambda+b)v,\\[4pt]
 W_0\,{}_\lambda\, v=d v, \\[4pt]
  G_0\,{}_\lambda\, v=0, \\[4pt]
\end{cases}
\end{eqnarray*}
where $a,b,d\in\C$;
 \item[{\rm (2)}] $\bar{\bar{\bar V}}_{a,b,0,d,\sigma}=\C[\partial]v_{\bar 0}\oplus\C[\partial]v_{\bar 1}$ with
 \begin{eqnarray*}
\\&&\begin{cases}
(\frac{1}{p}L_0)\,{}_\lambda\, v_{\bar0}=(\partial+a\lambda+b)v_{\bar0},\\[4pt]
 W_0\,{}_\lambda\, v_{\bar0}=d v_{\bar0}, \\[4pt]
  G_0\,{}_\lambda\, v_{\bar0}=\sigma v_{\bar1}, \\[4pt]
\end{cases}\   \mathrm{and} \
 \begin{cases}
(\frac{1}{p}L_0)\,{}_\lambda\, v_{\bar1}=(\partial+(a+1)\lambda+b)v_{\bar1},\\[4pt]
 W_0\,{}_\lambda\, v_{\bar1}=(d+1) v_{\bar1}, \\[4pt]
G_0\,{}_\lambda\,v_{\bar1}=0,\\[4pt]
\end{cases}\\
\end{eqnarray*}
where $a,b,d\in\C,\sigma\in\C^*$;
\item[{\rm (3)}] $\bar{\bar{\bar V}}_{a,b,\sigma}=\C[\partial]v_{\bar 0}\oplus\C[\partial]v_{\bar 1}$ with
  \begin{eqnarray*}
\\&&\begin{cases}
(\frac{1}{p}L_0)\,{}_\lambda\, v_{\bar0}=(\partial+a\lambda+b)v_{\bar0},\\[4pt]
 W_0\,{}_\lambda\, v_{\bar0}=(a-1) v_{\bar0}, \\[4pt]
  G_0\,{}_\lambda\, v_{\bar0}=\sigma\big(\partial+a\lambda+b\big) v_{\bar1}, \\[4pt]
\end{cases}\  \mathrm{and} \
 \begin{cases}
(\frac{1}{p}L_0)\,{}_\lambda\, v_{\bar1}=(\partial+a\lambda+b)v_{\bar1},\\[4pt]
 W_0\,{}_\lambda\, v_{\bar1}=a v_{\bar1}, \\[4pt]
G_0\,{}_\lambda\,v_{\bar1}=0,
\end{cases}\\
\end{eqnarray*}
where $a,b\in\C,\sigma\in\C^*$.
\end{itemize}
Based on Theorem \ref{77.1}, the classification of  non-trivial  conformal modules of rank $1+1$  over  $\mathfrak{sh}$  and $\mathfrak{s}(n)$ for $n\geq1$ can be presented.
 Moreover,  the same irreducibility assertions as those     modules of  $\mathfrak{sh}$  and $\mathfrak{s}(n)$ for $n\geq1$  in Proposition \ref{pro77.111} are obtained.
 We see that the irreducible modules of these modules exhaust all non-trivial finite irreducible conformal modules respectively  over  $\mathfrak{sh}$  and $\mathfrak{s}(n)$ for $n\geq1$.
\begin{coro}
Assume that $\bar V$ is a finite non-trivial irreducible conformal module   over  $\mathfrak{sh}$  or $\mathfrak{s}(n)$ for $n\geq1$.
\begin{itemize}\parskip-7pt
 \item[{\rm (i)}]
  If $\bar V$ is an $\mathfrak{s}(n)$-module for $n>1$, then
   \begin{eqnarray*}
\bar V\cong
\begin{cases}
\bar V_{a,b,0,d} &\ \mbox{for}\  (a,d)\neq(0,0),b\in\C,\\[4pt]
\bar V_{a,b,0,d,\sigma} &\ \mbox{for}\  (a,d)\neq(0,0),b\in\C,\sigma\in\C^*,\\[4pt]
\bar V_{a,b,\sigma}&\  \mbox{for} \ a,\sigma\in\C^*,b\in\C,\\[4pt]
\Pi({\bar V}_{a-1,b,0,d-1,\sigma})&\  \mbox{for} \ (a,d)\neq(1,1),b\in\C,\sigma\in\C^*,\\[4pt]
\Pi({\bar V}_{a,b,\sigma})&\  \mbox{for} \ a,\sigma\in\C^*,b\in\C;
\end{cases}\\
\end{eqnarray*}
 \item[{\rm (ii)}]
  If $\bar V$ is an $\mathfrak{s}(1)$-module, then
     \begin{eqnarray*}
\bar V\cong
\begin{cases}
\bar{\bar{V}}_{a,b,c,d} &\ \mbox{for}\  (a,c,d)\neq(0,0,0),b\in\C,\\[4pt]
\bar{\bar{V}}_{a,b,c,d,\sigma} &\ \mbox{for}\  (a,c,d)\neq(0,0,0),\sigma\in\C^*,b\in\C,\\[4pt]
\bar{\bar{V}}_{a,b,\sigma}&\  \mbox{for} \ a,\sigma\in\C^*,b\in\C,\\[4pt]
\Pi(\bar{\bar V}_{a-1,b,c,d-1,\sigma})&\  \mbox{for} \ (a,c,d)\neq(1,0,1),\sigma\in\C^*,b\in\C,\\[4pt]
\Pi(\bar{\bar V}_{a,b,\sigma})&\  \mbox{for} \ a,\sigma\in\C^*,b\in\C;
\end{cases}\\
\end{eqnarray*}
  \item[{\rm (iii)}]
  If $\bar V$ is an $\mathfrak{sh}$-module, then
        \begin{eqnarray*}
\bar V\cong
\begin{cases}
\bar{\bar{\bar{V}}}_{a,b,0,d} &\ \mbox{for}\  (a,d)\neq(0,0),b\in\C,\\[4pt]
\bar{\bar{\bar{V}}}_{a,b,0,d,\sigma} &\ \mbox{for}\  (a,d)\neq(0,0),\sigma\in\C^*,b\in\C,\\[4pt]
\bar{\bar{\bar{V}}}_{a,b,\sigma}&\  \mbox{for} \ a,\sigma\in\C^*,b\in\C,\\[4pt]
\Pi(\bar{\bar{\bar V}}_{a-1,b,0,d-1,\sigma})&\  \mbox{for} \ (a,d)\neq(1,1),\sigma\in\C^*,b\in\C,\\[4pt]
\Pi(\bar{\bar{\bar V}}_{a,b,\sigma})&\  \mbox{for} \ a,\sigma\in\C^*,b\in\C.
\end{cases}\\
\end{eqnarray*}
\end{itemize}
\end{coro}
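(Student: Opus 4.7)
The plan is to obtain this corollary as a direct consequence of the main classification Theorem~\ref{7.111}, by viewing each of $\mathfrak{sh}$, $\mathfrak{s}(1)$, and $\mathfrak{s}(n)$ (for $n>1$) as a quotient of $\mathcal{S}(p)$ for a suitable choice of $p$, and then pulling conformal modules back and forth. Concretely, from \eqref{b3.2} and \eqref{bn3.2} we have $\mathfrak{sh}\cong \mathcal{S}(p)_{[0]}=\mathcal{S}(p)/\mathcal{S}(p)_{\langle 1\rangle}$ (for any $p\in\C^*$; we may pick $p\ne -1$) and $\mathfrak{s}(n)=\mathcal{S}(-n)/\mathcal{S}(-n)_{\langle n+1\rangle}$. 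Thus a finite non-trivial irreducible conformal module over $\mathfrak{sh}$ (resp.\ $\mathfrak{s}(n)$) is the same as a finite non-trivial irreducible conformal module over $\mathcal{S}(p)$ (resp.\ $\mathcal{S}(-n)$) on which the corresponding ideal acts trivially.

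Next, I would invoke Theorem~\ref{7.111} to enumerate all finite non-trivial irreducible conformal modules over $\mathcal{S}(-n)$ and $\mathcal{S}(p)$. For $\mathfrak{s}(n)$ with $n\ge 2$ one takes $p=-n\ne -1$, which places us in case~(1) of the theorem, so the candidate list is $V_{a,b,0,d}$, $V_{a,b,0,d,\sigma}$, $V_{a,b,\sigma}$, $\Pi(V_{a-1,b,0,d-1,\sigma})$, $\Pi(V_{a,b,\sigma})$. For $\mathfrak{s}(1)$ we take $p=-1$, which is case~(2) of the theorem, and the additional nontrivial parameter $c$ (coming from the possible nontrivial $L_1$-action when $p=-1$) appears. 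For $\mathfrak{sh}$ we keep $p\ne -1$ and use case~(1).

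I would then check that the relevant ideal indeed acts trivially on each module in the list, which is where the quotient condition gets translated into parameter restrictions. In all modules of Theorem~\ref{77.1} one has $L_i\,{}_\lambda v=W_i\,{}_\lambda v=G_i\,{}_\lambda v=0$ for $i\ge 2$ and also for $i=1$ except in the $p=-1$ case. Consequently, for $\mathcal{S}(-n)$ with $n\ge 2$ the ideal $\mathcal{S}(-n)_{\langle n+1\rangle}\subset\mathcal{S}(-n)_{\langle 2\rangle}$ acts trivially automatically; for $\mathcal{S}(-1)_{\langle 2\rangle}$ this is likewise automatic; and for $\mathcal{S}(p)_{\langle 1\rangle}$ with $p\ne -1$ the only potentially obstructing piece, the $L_1$-action, is already zero. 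Thus every module in the Theorem~\ref{7.111} list descends to the corresponding quotient, and conversely every irreducible module over the quotient arises this way. Translating eigenvalue conventions through the identifications $\bar L_i\leftrightarrow L_i$ (for $\mathfrak{s}(n)$, with $p=-n$) and $L=\tfrac{1}{p}L_0$ (for $\mathfrak{sh}$, so that $L_0\,{}_\lambda v=p(\partial+a\lambda+b)v$ becomes $L\,{}_\lambda v=(\partial+a\lambda+b)v$) matches the list of Theorem~\ref{7.111} term by term with the modules $\bar V_{*,*,*,*,(*,)(*)}$, $\bar{\bar V}_{*,*,*,*,(*,)(*)}$, $\bar{\bar{\bar V}}_{*,*,*,*,(*,)(*)}$ introduced just before the statement.

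Finally, the irreducibility conditions $(a,d)\ne(0,0)$, $(a,c,d)\ne(0,0,0)$, $a\in\C^*$, $(a,d)\ne(1,1)$, $(a,c,d)\ne(1,0,1)$ are inherited verbatim from Proposition~\ref{pro77.111}, since the submodule analysis there depends only on the $L_0,W_0,G_0$-structure together with the possible $L_1$-action (which is present precisely in the $\mathfrak{s}(1)$ case). There is no genuine mathematical obstacle here beyond careful bookkeeping; the main thing to watch is (i) that the rescaling $L=\tfrac{1}{p}L_0$ is tracked consistently through the $\mathfrak{sh}$ formulas and (ii) that, for $\mathfrak{s}(1)$, the parameter $c$ is retained whereas for the other two algebras it is forced to $0$. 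Once this is recorded, the three cases (i), (ii), (iii) of the corollary follow immediately from Theorem~\ref{7.111}(1), Theorem~\ref{7.111}(2) and Theorem~\ref{7.111}(1) specialized at the trivial quotient respectively.
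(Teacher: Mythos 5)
Your proposal is correct and follows essentially the same route as the paper, which obtains the corollary directly from Theorem \ref{7.111} and Proposition \ref{pro77.111} by identifying $\mathfrak{sh}\cong\mathcal{S}(p)_{[0]}$ and $\mathfrak{s}(n)=\mathcal{S}(-n)_{[n]}$ and noting that the relevant ideals already act trivially on every module in the classification list. Your extra bookkeeping (the rescaling $L=\tfrac{1}{p}L_0$ for $\mathfrak{sh}$ and the retention of $c$ only in the $p=-1$ case for $\mathfrak{s}(1)$) is exactly the translation the paper leaves implicit.
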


\begin{rema}
By \eqref{ln23}, we also obtain the classification of     all non-trivial finite irreducible conformal modules over a subalgebra of Lie conformal algebra of  $N=2$ superconformal algebra.
\end{rema}

\section*{Acknowledgements}
This work was supported by the National Natural Science Foundation of China (No.11801369,   11871421,  11971350), the Zhejiang Provincial Natural Science Foundation of China (No. LY20A010022)
and the Scientific Research Foundation of Hangzhou Normal University (No. 2019QDL012). We also thank the referee for nice suggestions to make the paper more readable.
\section*{Data Availability} Data sharing no applicable to this article.

\small

\bigskip

Haibo Chen
\vspace{2pt}

  School of  Statistics and Mathematics, Shanghai Lixin University of  Accounting and Finance,   Shanghai
201209, China

\vspace{2pt}
rebel1025@126.com

\bigskip

Yanyong Hong
\vspace{2pt}

Department of Mathematics, Hangzhou Normal University,
Hangzhou 311121,  China

\vspace{2pt}
hongyanyong2008@yahoo.com

\bigskip

Yucai Su
\vspace{2pt}

  School of Mathematical Sciences, Tongji University, Shanghai
200092, China
\vspace{2pt}

ycsu@tongji.edu.cn
\end{document}